\definecolor{notes}{RGB}{0,255,178}
\colorlet{Changes@Color}{red}
\def\@tvsp{\mathchoice{{}\mkern-3mu}{{}\mkern-3mu}{{}\mkern-3mu}{}}
\def\ltrivert{|\@tvsp|\@tvsp|}
\def\rtrivert{|\@tvsp|\@tvsp|}
\def\@avgsp{\mathchoice{{}\mkern-6mu}{{}\mkern-6mu}{{}\mkern-6mu}{}}
\def\llaverage{\{\@avgsp\{}
\def\rraverage{\}\@avgsp\}}
\NewDocumentCommand{\normDG}{O{\cdot} O{j} O{}}{\ensuremath{\|\ifthenelse{\equal{#1}{}}{\cdot}{#1}\|_{DG,\ifthenelse{\equal{#2}{}}{j}{#2}}\ifthenelse{\equal{#3}{}}{}{^{#3}}}\xspace} 
\NewDocumentCommand{\normL}{O{\cdot} O{2} O{\Om} O{}}{\ensuremath{\|\ifthenelse{\equal{#1}{}}{\cdot}{#1}\|_{L^{\ifthenelse{\equal{#2}{}}{1}{#2}}(\ifthenelse{\equal{#3}{}}{\Om}{#3})}\ifthenelse{\equal{#4}{}}{}{^{#4}}}\xspace}
\NewDocumentCommand{\normH}{O{\cdot} O{1} O{\Om} O{}}{\ensuremath{\|\ifthenelse{\equal{#1}{}}{\cdot}{#1}\|_{H^{\ifthenelse{\equal{#2}{}}{1}{#2}}(\ifthenelse{\equal{#3}{}}{\Om}{#3})}\ifthenelse{\equal{#4}{}}{}{^{#4}}}\xspace}
\NewDocumentCommand{\norms}{O{\cdot} O{s} O{j} O{}}{\ensuremath{\ltrivert\ifthenelse{\equal{#1}{}}{\cdot}{#1}\|_{\ifthenelse{\equal{#2}{}}{s}{#2},\ifthenelse{\equal{#3}{}}{j}{#3}}\ifthenelse{\equal{#4}{}}{}{^{#4}}}\xspace}
\NewDocumentCommand{\Aa}{O{j} O{\cdot} O{\cdot}}{\ensuremath{\mathcal{A}_{\ifthenelse{\equal{#1}{}}{j}{#1}}(\ifthenelse{\equal{#2}{}}{\cdot}{#2},\ifthenelse{\equal{#3}{}}{\cdot}{#3})}\xspace} 
\NewDocumentCommand{\mcal}{O{} O{} O{}}{\ensuremath{\mathcal{#1}\ifthenelse{\equal{#2}{}}{}{_{#2}}\ifthenelse{\equal{#3}{}}{}{^{#3}}}\xspace}
\newcommand{\average}[1]{\ensuremath{\llaverage #1\rraverage}\xspace}  
\newcommand{\jump}[1]{\ensuremath{\llbracket #1\rrbracket}\xspace}
\newcommand{\Om}{\ensuremath{\Omega}\xspace}
\newcommand{\elem}{\ensuremath{\kappa}\xspace}
\newcommand{\h}[1]{\ensuremath{h_{#1}}\xspace}
\newcommand{\hmin}[1]{\ensuremath{h_{#1}}\xspace}
\newcommand{\abs}[1]{| #1 |}
\newcommand{\CINV}[4]{\textcolor{black}{ \mathsf{C}_{\mathsf{INV}}(#1,#2, #3, #4)}} 
\newcommand{\CINVnew}[1]{\textcolor{black}{ \mathsf{C}^{#1}_{\mathsf{inv}}}}
\newcommand{\CInew}[1]{\textcolor{black}{\mathsf{C}_{\mathsf{I}}^{#1}}}
\newcommand{\Cinterpnew}[1]{\textcolor{black}{\mathsf{C}^{#1}_{\mathsf{interp}}}} 
\newcommand{\Ccont}{\textcolor{black}{\mathsf{C}_{\mathsf{cont}}}}
\newcommand{\Ccoer}{\textcolor{black}{\mathsf{C}_{\mathsf{coer}}}}
\newcommand{\CGnew}[1]{\textcolor{black}{\mathsf{G}^{#1}}}
\newcommand{\CLtwonew}[1]{\textcolor{black}{\mathsf{C}^{#1}_{L^2}}}
\newcommand{\Cstab}{\textcolor{black}{\mathsf{C}_{\mathsf{stab}}}}
\newcommand{\Cmesh}{\textcolor{black}{\mathsf{C}_{\mathsf{mesh}}}}
\newcommand{\Ceig}[1]{\textcolor{black}{\mathsf{C}^{#1}_{\mathsf{eig}}}}
\newcommand{\Ctwolvl}{\textcolor{black}{\mathsf{C}_{\mathsf{2lvl}}}}
\newcommand{\Cequiv}{\textcolor{black}{\mathsf{C}_{\mathsf{equiv}}}}
\newcommand{\Chat}{\textcolor{black}{\widehat{\mathsf{C}}}}
\newcommand{\Ctildenew}[2]{\textcolor{black}{\widetilde{\mathsf{C}}_{#1,#2}}}
\newcommand{\NPCG}{\textcolor{black}{\mathsf{N}_{\mathsf{it}}^{\mathsf{PCG}}}}
\newcommand{\NCG}{\textcolor{black}{\mathsf{N}_{\mathsf{it}}^{\mathsf{CG}}}}
\newcommand{\NAMG}{\textcolor{black}{\mathsf{N}_{\mathsf{it}}^{\mathsf{AMG}}}}
\newcommand{\paul}[1]{\textcolor{black}{#1}}
\newtheorem{assumption}[theorem]{Assumption}
\title{Multigrid algorithms for $\boldsymbol{hp}$-version Interior Penalty Discontinuous Galerkin methods on polygonal and polyhedral meshes\thanks{Paola F. Antonietti has been partially supported by SIR (Scientific Independence of young Researchers) starting grant n. RBSI14VT0S ``\emph{PolyPDEs: Non-conforming polyhedral finite element methods for the approximation of partial differential equations}" funded by the Italian Ministry of Education, Universities and Research (MIUR).}}
\titlerunning{Multigrid algorithms for $hp$--DG methods on polygonal and polyhedral meshes}
\author{P. F.~Antonietti \and P.~Houston \and X.~Hu \and M.~Sarti \and M.~Verani}
\institute{P. F. Antonietti \and M. Sarti \and M. Verani\at MOX-Laboratory for Modeling and Scientific Computing, Dipartimento di Matematica, Politecnico di Milano, Piazza Leondardo da Vinci 32, 20133 Milano, Italy.
\and
P. F. Antonietti \at\email{paola.antonietti@polimi.it}
\and
M. Sarti \at\email{marco.sarti@polimi.it} 
\and
M. Verani \at\email{marco.verani@polimi.it} 
\and 
P. Houston \at School of Mathematical Sciences, University of Nottingham, University Park, Nottingham, NG7 2RD, United Kingdom.\\\email{paul.houston@nottingham.ac.uk}
\and 
X.~Hu \at Department of Mathematics, Tufts University, 503 Boston Avenue Bromfield-Pearson, Medford, MA 02155 \\
\email{Xiaozhe.Hu@tufts.edu}
}
\begin{document}

\maketitle
\begin{abstract}
In this paper we analyze the convergence properties of two-level and W-cycle multigrid solvers for the numerical
solution of the linear system of equations arising from $hp$-version symmetric interior penalty discontinuous Galerkin discretizations 
of second-order elliptic partial differential equations on polygonal/polyhedral meshes. We prove that the two-level method converges uniformly with respect to the granularity of the grid and the 
polynomial approximation degree $p$, provided that the number of smoothing steps, which depends on $p$, is chosen sufficiently 
large. An analogous result is obtained for the W-cycle multigrid algorithm, which is proved to be uniformly convergent with 
respect to the mesh size, the polynomial approximation degree, and the number of levels, provided the 
number of smoothing steps is chosen sufficiently large. Numerical experiments are presented which underpin the 
theoretical predictions; moreover, the proposed multilevel solvers are shown to be convergent in practice, even when 
some of the theoretical assumptions are not fully satisfied.
\keywords{$hp$-discontinuous Galerkin methods \and polygonal/polyhedral grids \and two-level and multigrid algorithms}
\subclass{65N30 \and 65N55 \and 65N22}
 
\end{abstract}
\section{Introduction}

The original articles concerned with the construction and mathematical analysis
of Discontinuous Galerkin (DG) methods date back over 50 years ago. 
For hyperbolic partial differential equations, in 1973 Reed \& Hill, cf. \cite{ReedHill73}, developed 
the first DG discretization of the neutron transport equation. Independently, DG methods were constructed for elliptic
problems based on weakly
enforcing Dirichlet boundary conditions; see, for example, \cite{Aubin1970,Babuska73,Lions68,nitsche}.
In particular, we highlight the works of Nitsche \cite{nitsche} and Baker \cite{Baker77},  which form the basis
of the class of interior penalty DG methods, cf. also \cite{Arn82,Whee78}.
Since the very early work, DG methods were partially abandoned, in part due to the increase in the number of degrees of freedom compared, for instance, with their conforming counterparts. However, in the last two decades there has been a renewed interest in the field of discontinuous discretizations both from a theoretical and computational viewpoint, cf. \cite{CockKarnShu00,HestWar,Rivi08,DiPiErn12},
for example. This resurgence is due to the inherent advantages offered by DG schemes, such as, for example, the limited interelement communication, which is restricted only to neighbouring elements, the local conservativity property, the simplicity in treating meshes with hanging nodes, and the development of efficient $hp$-adaptivity refinement strategies. Moreover, recently in 
\cite{BasBotColSub,Basetal12,BasBoetal12,Canetal14} it has been shown that the underlying DG polynomial bases may be efficiently constructed in the physical frame, without needing to map local polynomial spaces defined in a given reference/canonical frame. In this way, DG methods can easily deal with general-shaped elements, including polygonal/polyhedral elements, cf. 
\cite{AntBreMar09,dg_cfes_2012,Canetal14,BasBotColSub,CangianiDongGeorgoulisHouston_2016,AntFacRusVer2016,CangianiDongGeorgoulis_2016,CDGH_book} \added{and the recent review paper \cite{Anetal16_review}}. 
The flexibility of DG methods in handling general meshes has no immediate
counterpart in the conforming framework, where the design of suitable finite element spaces for meshes of polygons/polyhedra is far from being a trivial task. Several examples include the Composite Finite Element Method \cite{HackSau97,HackSau972}, the Polygonal Finite Element Method \cite{SukTab04,TabSuk08}, the Extended Finite Element Method \cite{FriBel10}, \added{the Mimetic Finite Difference Method \cite{HymShaStei97,BreLipSim05,BreLipSha05,BreBufSha06,BeiManLip_2014,Anetal16}} and the most recent Virtual Element Method \cite{Beietal13,BeiBreMarRus16,BeiBreMarRus16b,AntBeiMorVer,AntBeiScaVer16}.

At present, the design of solvers and preconditioners for DG discretizations on nonstandard grids lends itself to huge developments in the field of numerical analysis. Indeed, to the best of our knowledge, the only study regarding solution techniques for this class of problems is reported in \cite{AntGiaHou13}, where a nonoverlapping Schwarz preconditioner for composite DG finite element
methods on complicated domains is analyzed, \added{see also the recent paper \cite{AntHouSme} where optimal bounds for nonoverlapping Schwarz preconditioners for $hp$-version DG methods on standard shape-regular grids have been obtained.} In the current article we exploit the theoretical framework developed in \cite{Canetal14} to study the performance of a two-level and W-cycle multigrid solver.  The possibility to employ general-shaped elements in the physical framework makes the choice of multilevel schemes natural. The flexibility afforded by this approach allows us to define the set of grids needed in the
multigrid algorithm by agglomeration; thereby, the definition of the associated subspaces is straightforward, since inter-element continuity is not required. This property overcomes the usual difficulties encountered in the construction of agglomeration multigrid schemes in the conforming framework, where the agglomeration strategy must be followed by a proper definition of the conforming subspaces. In \cite{Chanetal98}, for example, the sublevels are obtained by combining a graph based agglomeration algorithm and re-triangulations, thus resulting in a set of non-nested grids, while the associated nested subspaces are defined by 
introducing suitable interpolation operators. The resulting V-cycle multigrid algorithm converges uniformly with respect to 
the meshsize $h$ provided that the number of levels is kept fixed.

In this paper we analyze the convergence of a two-level scheme and W-cycle multigrid method for the solution of the linear system of equations arising from the $hp$-version of the interior penalty DG scheme on polygonal/polyhedral meshes \cite{Canetal14},
thereby, extending the theoretical framework developed in \cite{AntSarVer15} for standard quasi-uniform \added{triangular/quadrilateral} meshes, \added{cf. also \cite{AntSarVer16} for three-dimensional numerical experiments}. Our analysis is based on the smoothing and approximation properties associated with the proposed method: the former corresponds to a Richardson iteration, whose study requires a result concerning the spectral properties of the stiffness matrix, while  the latter is inherent to the interior penalty DG scheme itself and exploits the error estimates derived in \cite{Canetal14}. 
\textcolor{black}{We show that, under suitable assumptions on the agglomerated coarse grid, both the two-level and the W-cycle multigrid schemes converge uniformly with respect to the granularity of the underlying partition and the polynomial approximation degree $p$, provided that the number of smoothing steps is chosen of order \textcolor{black}{$p^{2+\mu}$, with $\mu=0,1$}. 
Throughout the analysis, we also track the dependence of the error reduction factor of the two solvers on the geometric properties of the agglomerated grids, thereby recovering a similar result to the case when standard quasi-uniform \added{triangular and/or quadrilateral} meshes are employed.}

The rest of this paper is organized as follows. In Section~\ref{sec:theory} we introduce the interior penalty DG scheme
for the discretization of second-order elliptic problems on general meshes consisting of polygonal/polyhedral elements. Then
in Section~\ref{sec:preliminary}, we recall some preliminary analytical results concerning this class of schemes. 
In Section~\ref{sec:mulgr} we define the multilevel framework and introduce several technical results. We then focus first on the analysis of \added{the} two-level method, followed by the extension to the W-cycle multigrid solver. The main theoretical results are investigated through a series of numerical experiments presented in Section~\ref{sec:numerical}, \added{where we also present a comparison with an unsmoothed Algebraic Multigrid method.}
\added{Finally, in Section~\ref{sec:conclusions} we draw some conclusions.}

\section{Model problem and discretization}
\label{sec:theory}
We consider the weak formulation of the Poisson problem, subject to a homogeneous Dirichlet boundary condition: find $u\in V = H^2(\Omega)\cap H_0^1(\Omega)$ such that
\begin{equation}
\int_\Omega \nabla u \cdot \nabla v\ dx =\int_\Omega f v\ dx \, \qquad \forall v\in V,
\label{weak}
\end{equation}
with $\Omega\in \mathbb{R}^d$, $d = 2,3$, a convex polygonal/polyhedral domain with Lipschitz boundary and $f$ a given function in $L^2(\Omega)$.

For the sake of brevity, throughout this article, we write $x\lesssim y$ and $x\gtrsim y$ in lieu of $x\leq C y$ and $x\geq C y$, respectively, for a positive constant $C$ independent of the discretization parameters. Moreover, $x\approx y $ means that there exist constants $C_1, C_2>0$ such that $C_1 y \leq x \leq C_2y$. When required, the constants will be written explicitly.

In view of the forthcoming multigrid analysis, we denote by $\{\mcal[T][j]\}_{j=1}^J$ a sequence of partitions of the domain \Om, each of which consists  of  disjoint open polygonal/polyhedral elements \elem of diameter \h{\elem}, such that $\overline\Om = \bigcup_{\elem\in\mcal[T][j]}\bar\elem$, $j=1,\dots,J$.  We denote the mesh size of \mcal[T][j], $j=1,\dots,J$,
by $\h{j} =\max_{\elem\in\mcal[T][j]}\h{\elem}$. To each \mcal[T][j], $j=1,\dots,J$, we associate the corresponding \paul{DG} finite element space $V_j$, $j=1,\dots,J$, defined as
\begin{equation}
\label{Vj}
V_j =\{v\in L^2(\Om):v|_\elem\in \mcal[P][p_j](\elem),\elem\in\mcal[T][j]\},
\end{equation}
where $\mcal[P][p_j](\elem)$ denotes the space of polynomials of total degree at most $p_j\geq1$ on $\elem\in\mcal[T][j]$. 
A suitable choice of the sequences $\{\mcal[T][j]\}_{j=1}^J$ and $\{V_j\}_{j=1}^J$ leads to the so-called 
$h$- \added{and} $p$-multigrid schemes. In particular, the $h-$multigrid method is based on employing
a constant polynomial approximation degree for each $j$, $j=1,\dots,J$, (\emph{i.e.}, $p_j=p$), 
on a set of nested partitions $\{\mcal[T][j]\}_{j=1}^J$, such that the coarse level \mcal[T][j-1], $j=2,\dots,J$, is obtained by agglomeration from \mcal[T][j] in such a way that
\begin{alignat}{3}
\label{hjhj1}
\h{j-1}&\lesssim\h{j}&&\leq \h{j-1}\qquad \forall j = 2,\dots, J,
\end{alignat} 
\emph{i.e.}, we \paul{assume} a bounded variation hypothesis between subsequent levels. In the $p$-multigrid method, the partition is kept fixed for any $j$, $j=1,\dots,J$, 
while we assume that the polynomial degrees vary moderately from one level to another, \emph{i.e.},
\begin{equation}
\label{pjpj1}
p_{j-1}\leq p_{j}\lesssim p_{j-1}\qquad \forall j = 2,\dots, J.
\end{equation}
Note that with the above choices we obtain nested 
finite element spaces $V_j$, $j=1,\dots,J$, \emph{i.e.}, $V_1\subseteq V_2\subseteq\dots\subseteq V_J$.

\subsection{Grid assumptions}
In this section, we outline some key definitions and assumptions. For any $\mcal[T][j]$, $j=1,\dots,J$, when no hanging nodes/edges are included in the partition, we define the \emph{interfaces} of the mesh \mcal[T][j] as the set of $(d-1)$-dimensional facets of the elements $\elem\in\mcal[T][j]$. The presence of hanging nodes/edges, on the other hand, can be handled by defining the interfaces of $\mcal[T][j]$ as the intersection of the $(d-1)$-dimensional facets of neighboring elements. This implies that, for $d=2$, an interface will always consist of a \paul{piecewise linear} line segment, \paul{i.e., they consist of a set of $(d-1)$--dimensional simplices. However, in general for $d=3$, the interfaces of \mcal[T][j] will consist of general polygonal surfaces. Thereby, we assume that each planar section of each interface of an element $\elem\in\mcal[T][j]$ may be subdivided into a set of co-planar triangles ($(d-1)$--dimensional simplices). We refer to these $(d-1)$--dimensional simplices, whose union form the interfaces of $\mcal[T][j]$, as faces. With this notation, we assume that the sub-tessalation of element interfaces into $(d-1)$--dimensional simplices is given. We denote by $\mcal[F][j]$ the set of all mesh faces;} moreover, we have that $\mcal[F][j] = \mcal[F][j]^I\cup\mcal[F][j]^B$, where $\mcal[F][j]^I$ is the set of interior element faces of \mcal[T][j], such that $F \subseteq \partial\elem^+\cap\partial\elem^-$ for any $F\in\mcal[F][j]^I$, where $\elem^\pm$ are two adjacent elements in $\mcal[T][j]$.  The set $\mcal[F][j]^B$ contains the boundary element faces, \emph{i.e.}, $F\subset\partial\Om$ for $F\in \mcal[F][j]^B$.\\

\textcolor{black}{
We are now ready to introduce the following assumptions on the partitions $\mcal[T][j]$, $j=1,\dots,J$; cf.} \added{\cite{CangianiDongGeorgoulis_2016}}. 
\textcolor{black}{In the case of the $h$-multigrid scheme, 
these assumptions must be satisfied for the meshes generated by the underlying agglomeration process.
\begin{assumption}
\label{hyp:grid_new}
Given $\elem \in \mcal[T][j]$, there exists a set of nonoverlapping  (not necessarily shape-regular) $d$--dimensional simplices $T_{\ell}\subseteq\elem$, \paul{$\ell=1,2,\ldots,n_\kappa$}, such that, for any face $F \subset \partial \elem$, $\overline{F}= \partial \overline{\elem}\cap \partial \overline{T_{\ell}}$, for some $\ell$,
\begin{equation}\label{eq:subcovering}
\begin{aligned}
&&\cup_{\ell=1}^{n_\kappa}\overline{T}_{\ell} \added{\subseteq} \overline{\kappa},
\end{aligned}
\end{equation}
and the diameter $h_{\elem}$ of $\elem$ can be bounded by
\begin{equation*}
\begin{aligned}
h_{\elem} \lesssim \frac{d \abs{T_{\ell}}}{\abs{F}},
&& \ell=1,2, \ldots,n_\kappa.
\end{aligned}
\end{equation*}
\end{assumption}
\begin{remark}
\paul{
We point out that Assumption~\ref{hyp:grid_new} does not put a restriction 
on either the number of faces that an element possesses, or indeed the measure of a
face of an element $\elem \in \mcal[T][j]$, relative to the measure of the
element itself. This will be particularly important in the agglomeration 
procedure employed within our $h$-multigrid method, since as the number of 
levels increases, the number of faces that the resulting agglomerated elements
may contain grows, while 
their measure, relative to the element measure, may degenerate.}
\end{remark}
\begin{remark}
As pointed out in \cite{CangianiDongGeorgoulis_2016}, meshes obtained by agglomeration of a finite number of polygons that are uniformly star-shaped with respect to the largest inscribed ball will automatically satisfy Assumption~\ref{hyp:grid_new}.  Therefore, from the practical point of view, given a fine-level mesh $\mcal[T][J]$ consisting of uniformly star-shaped elements, a finite number of agglomeration steps will produce a sequence of admissible grids. To allow the number of agglomeration levels to increase arbitrarily 
one can either \emph{i)} ensure that each of the agglomerated meshes satisfy Assumption~\ref{hyp:grid_new}; \emph{ii)} check, at each level, that the (slightly more restrictive) shape-regularity criterion on the agglomerates is satisfied.
\end{remark}
}
\begin{assumption}
\label{hyp:shapereg}
For any $\elem\in\mcal[T][j]$, $j=1,\dots,J$, we assume that $h_\elem^d\geq|\elem|\gtrsim h_\elem^d,$
with $d=2,3$.
\end{assumption}
%
We next introduce the following additional mesh condition, cf.~\cite{CangianiDongGeorgoulisHouston_2016},
\added{which will be required in order to obtain the inverse estimates presented in Lemma \ref{lem:inverse}}.
\begin{assumption}\label{hyp:shape-regular-element}
\textcolor{black}{
Every polytopic element  $\kappa\in \mcal[T][j]$, 
admits a sub-tri\-ang\-ulation into at most $m_\kappa$ 
shape-regular simplices $\frak{s}_i$, $i=1,2,\dots , m_\kappa$, such that
$\bar{\kappa} = \cup_{i=1}^{\added{m_\kappa}} \bar{\frak{s}}_i$ and
$$
|\frak{s}_i| \added{\gtrsim} |\kappa|
$$
for all $i=1,\dots, m_\kappa$, for some $m_\kappa\in\mathbb{N}$.
\added{The hidden constant is independent of $\kappa$ and $\mcal[T][j]$}.
}
\end{assumption}
In view of the approximation result that will be presented in the next section we also introduce the following additional assumption.
\begin{assumption} \label{hyp:covering} Let $\mcal[T][j][\sharp]=\{\mcal[K]\}$, denote a covering of \Om consisting of shape-regular $d$-dimensional simplices $\mcal[K]$. We assume that, for any $\elem\in \mcal[T][j]$, there exists  $\mcal[K]\in\mcal[T][j][\sharp]$ such that $\elem\subset \mcal[K]$ and
$$\paul{\max_{\kappa\in {\cal T}_j} \,}
\textnormal{card} \left\{ \kappa^\prime \in {\cal T}_j : \kappa^\prime \cap \mathcal{K} \neq \emptyset, ~\mathcal{K}\in\mathcal{T}_j^{\sharp} ~\mbox{ such that } ~\kappa\subset\mathcal{K} \right\}\lesssim 1.$$
Consequently, for each pair $\kappa$, $\mathcal{K}\in\mathcal{T}_j^{\sharp}$, with $\kappa\subset\mathcal{K}$,
$\textnormal{diam}(\mcal[K])\lesssim \h{\elem}.$
\end{assumption}
\textcolor{black}{We also need the following assumption on the quality of agglomerated grids. 
\begin{assumption} \label{ass:agglomeration_new} 
For any $F\in\mcal[F][j]\cap\mcal[F][j-1]$, $j=2,\ldots,J$, we denote by $\elem^\pm_{j}$ and $\elem^\pm_{j-1}$ the neighboring elements sharing the face $F$ in \mcal[T][j] and \mcal[T][j-1], respectively. We assume that there exists $\Theta>0$ such that
\begin{equation*}
\begin{aligned}
& 1<\frac{h_{\elem_{j-1}^\pm}}{h_{\elem_j^\pm}}\leq \Theta\quad\forall F\in\mcal[F][j]\cap\mcal[F][j-1].
\end{aligned}
\end{equation*} 
\end{assumption}
We remark that Assumption~\ref{ass:agglomeration_new} is satisfied 
if the agglomeration algorithm preserves the shape-regularity of the elements.
In Figure~\ref{fig:agglo}, we show two examples of possible macroelements: the agglomerate on the left is not suitable 
to guarantee Assumption~\ref{ass:agglomeration_new} due to the presence of a dominant dimension, while the element on the right 
can be considered appropriate. Moreover, we note that the fulfilment of Assumption~\ref{ass:agglomeration_new} can be considered a good criterion in evaluating the quality of the agglomerated grids employed in the multigrid algorithm,
cf. Figure \ref{fig:agglo} for an illustration.
\begin{figure}[t!]
\center
\includegraphics[scale=0.8]{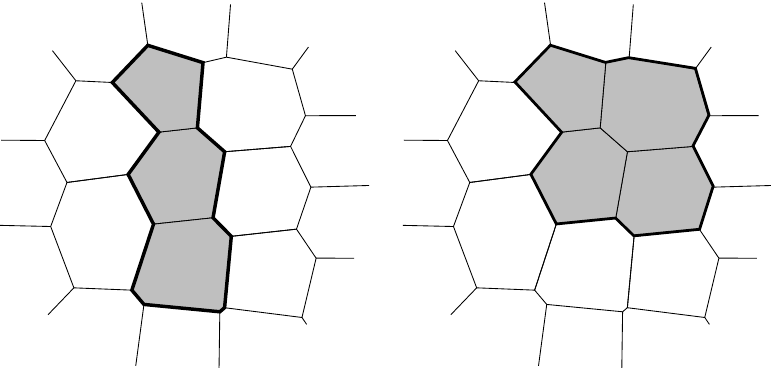}
\caption{\added{Examples of agglomerated elements. The agglomerated element on the left violates Assumption 
\ref{ass:agglomeration_new} whereas the one on the right satisfies Assumption~\ref{ass:agglomeration_new}}.}
\label{fig:agglo}
\end{figure}
}

\textcolor{black}{
Finally, to keep the notation as simple as possible, in the forthcoming analysis we will assume that,
for any $j=1,\dots,J$, the decompositions  $\mcal[T][j]$ are quasi-uniform, i.e., $\h{j} \approx \min_{\elem\in\mcal[T][j]}\h{\elem}$. We remark that the above assumption can be weakened and only a local bounded variation property is needed for our theoretical analysis; see Remark \ref{rem:theta} below for details.
}

\subsection{DG formulation}
The definition of the proceeding DG method is based on employing suitable jump and average operators. 
To this end, for (sufficiently smooth) vector- and scalar-valued functions $\boldsymbol{\tau}$ and $v$, respectively, 
we define jumps and averages across $F\in\mcal[F][j]$, $j=1,\ldots,J$, as follows:
\begin{alignat*}{4}
\jump{\boldsymbol{\tau}} &= \boldsymbol{\tau}^+ \cdot \mathbf{n}^+ + \boldsymbol{\tau}^- \cdot \mathbf{n}^-,\quad& \average{\boldsymbol{\tau}} &= \frac{\boldsymbol{\tau}^+  +\boldsymbol{\tau}^-}{2},\qquad & F\in \mcal[F][j]^I,\\
\jump{v} &= v^+ \mathbf{n}^+ + v^-\mathbf{n}^-,& \average{v} &= \frac{v^+  +v^-}{2},\qquad & F\in \mcal[F][j]^I,\\
\jump{v} &= v^+\mathbf{n}^+,& \average{ \boldsymbol{\tau}} &=\boldsymbol{\tau}^+,&F\in \mcal[F][j]^B,
\end{alignat*}
where $v^\pm$ and $\boldsymbol{\tau}^\pm$ denote the traces of $v$ and $\boldsymbol{\tau}$ on $F$ taken from the interior of 
$\elem^\pm$, respectively, and $\mathbf{n}^\pm$ the outward unit normal vectors to $\partial\elem^\pm$, respectively, cf. \cite{Arnetal01}.
On any level $j$, $j=1,\dots,J$, we consider the bilinear form $\Aa[j]: V_j\times V_j\rightarrow \mathbb{R}$, 
corresponding to the symmetric interior penalty DG method, defined by
\begin{align}
\label{bilinearj}
\Aa[j][u][v] =& \sum_{\elem\in\mcal[T][j]}\int_\elem \nabla u\cdot\nabla v\ dx
-\sum_{F\in\mcal[F][j]}\int_F \left(\average{\nabla u}\cdot\jump{v}+\jump{u}\cdot\average{\nabla v}\right) \ ds\notag\\
&+\sum_{F\in\mcal[F][j]}\int_F\sigma_j\jump{u}\cdot\jump{v}\ ds,
\end{align}
where $\sigma_j\in L^\infty(\mcal[F][j])$ denotes the interior penalty stabilization function
\textcolor{black}{
$\sigma_j: \mcal[F][j]\rightarrow \mathbb{R}^+$, which is defined by
\begin{equation}\label{eq:stabilization_function}
\sigma_j(x)=
\left\{
\begin{aligned}
&C_{\sigma}^j \max_{\elem \in \{\elem^+,\elem^-\} }
\Big\{ \frac{p_j^2}{h_\elem}\Big\},\ 
&x\in F, ~F\in\mcal[F][j]^I,\ F \subset \partial\elem^+\cap\partial\elem^-,\\
&C_{\sigma}^j \frac{p_j^2}{h_{\elem}},\ &
x\in F, ~F\in\mcal[F][j]^B,\ F \subset \partial\elem^+\cap\partial\Omega,\\
\end{aligned}
\right.
\end{equation}
with $C_{\sigma}^j>0$ independent of $p_j$, $|F|$ and $|\elem|$. }

In this article, we develop two-level and W-cycle multigrid schemes to compute
the solution of the following problem on the finest level $J$: find $u_J\in V_J$ such that
\begin{equation}
\label{DGfem}
\Aa[J][u_J][v_J] = \int_\Om fv_J\ dx\quad\forall v_J\in V_J.
\end{equation}

\section{Preliminary results}
\label{sec:preliminary}
\textcolor{black}{
We first recall the following trace-inverse inequality 
for polygonal/polyhedral elements.
\begin{lemma} \label{lem:inversepoly}
Assume that the sequence of meshes $\mcal[T][j]$, $j=1,\dots,J$, satisfies 
Assumption~\ref{hyp:grid_new}. Let $\elem\in\mcal[T][j]$, $j=1,\dots,J$, 
be a polygonal/polyhedral element, then \paul{the following bound} holds 
\begin{equation}
\begin{aligned}
\label{inversepoly}
\normL[v][2][\partial\kappa][2]\leq 
\CINVnew{j} \frac{p_j^2}{h_{\elem}}\normL[v][2][\elem][2]
&& \forall\, v\in\mcal[P][p_j](\elem),
\end{aligned}
\end{equation}
where $\CINVnew{j}$ is independent of $|\elem|$, $p_j$ and $v$.
\end{lemma}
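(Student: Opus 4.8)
The plan is to reduce the trace bound on the polytope \elem to a family of trace bounds on the simplices supplied by Assumption~\ref{hyp:grid_new}, and then to reassemble them \emph{without} paying a factor equal to the (possibly very large) number of faces of \elem. First I would split the boundary as $\partial\elem=\bigcup_{F\subset\partial\elem}F$, so that $\normL[v][2][\partial\elem][2]=\sum_{F\subset\partial\elem}\normL[v][2][F][2]$, and invoke Assumption~\ref{hyp:grid_new} to attach to each face $F$ the simplex $T_{\ell(F)}\subseteq\elem$ for which $\overline{F}=\partial\overline{\elem}\cap\partial\overline{T_{\ell(F)}}$. Since the set $\partial\overline{\elem}\cap\partial\overline{T_\ell}$ is determined by $T_\ell$ alone, distinct faces are matched to distinct simplices, i.e. the map $F\mapsto\ell(F)$ is injective; this observation is ultimately what prevents the final constant from depending on the number of faces.

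On each face I would then apply the classical sharp polynomial trace-inverse inequality on a single simplex,
\begin{equation*}
\normL[v][2][F][2]\leq \frac{(p_j+1)(p_j+d)}{d}\,\frac{|F|}{|T_{\ell(F)}|}\,\normL[v][2][T_{\ell(F)}][2]\qquad\forall\, v\in\mcal[P][p_j](\elem).
\end{equation*}
The crucial feature of this estimate is that its constant depends only on the area/volume ratio $|F|/|T_{\ell(F)}|$ and not on the shape of $T_{\ell(F)}$, which is precisely why the \emph{not necessarily shape-regular} simplices provided by Assumption~\ref{hyp:grid_new} are sufficient here. Using the bound $\h{\elem}\lesssim d\,|T_{\ell(F)}|/|F|$ of that assumption to control $|F|/|T_{\ell(F)}|\lesssim d/\h{\elem}$, and noting $(p_j+1)(p_j+d)/d\lesssim p_j^2$, yields the face-wise estimate $\normL[v][2][F][2]\lesssim (p_j^2/\h{\elem})\,\normL[v][2][T_{\ell(F)}][2]$.

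It remains to sum over faces, and this is where the main obstacle lies: a crude bound $\normL[v][2][T_{\ell(F)}][2]\leq\normL[v][2][\elem][2]$ used face by face would introduce a factor equal to the number of faces, destroying robustness for agglomerated elements whose face count grows with the number of levels. Instead I would exploit that the $T_\ell$ are pairwise non-overlapping and contained in \elem, together with the injectivity of $F\mapsto\ell(F)$, to telescope the local contributions:
\begin{equation*}
\sum_{F\subset\partial\elem}\normL[v][2][T_{\ell(F)}][2]\leq \sum_{\ell=1}^{n_\kappa}\normL[v][2][T_{\ell}][2]=\normL[v][2][\cup_{\ell}T_{\ell}][2]\leq \normL[v][2][\elem][2].
\end{equation*}
Combining the three displays gives \eqref{inversepoly} with a constant \CINVnew{j} depending only on the hidden constant in Assumption~\ref{hyp:grid_new} and on $d$, hence independent of $|\elem|$, $p_j$ and $v$, as claimed.
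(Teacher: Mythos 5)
Your proposal is correct and follows essentially the same route as the paper: decompose $\partial\elem$ into faces, apply the classical simplex trace-inverse inequality on the associated simplices $T_{\ell}$ from Assumption~\ref{hyp:grid_new}, use $\abs{F}\abs{T_\ell}^{-1}\lesssim d\,h_\elem^{-1}$, and sum using the non-overlap of the $T_\ell$ inside $\elem$. Your explicit remark on the injectivity of $F\mapsto\ell(F)$ is a point the paper leaves implicit, but it does not change the argument.
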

The proof can be obtained with trivial modifications with respect to the ones given in \cite{CangianiDongGeorgoulis_2016,CDGH_book}.
For the sake of completeness we report it and refer to \cite{CangianiDongGeorgoulis_2016,CDGH_book} for further details.
\begin{proof}
From Assumption~\ref{hyp:grid_new}, there exists a set of nonoverlapping (not necessarily 
\paul{shape-regular}) $d$-simplicial elements $T_{\ell}\subseteq\elem$ 
\paul{such that, given a face $F\subset \partial \elem$, for some $\ell$, $1\leq \ell\leq n_\kappa$,  $\overline{F}=\partial \overline{\elem} \cap \partial \overline{T_{\ell}}$.}
Therefore, 
\paul{
\begin{equation*}
\normL[u][2][\partial\kappa][2]
= \sum_{F\subset\partial\kappa} \normL[u][2][F][2]
\lesssim p_j^2 \sum_{\ell=1}^{n_\kappa} \frac{\abs{F}}{\abs{T_\ell}}\normL[u][2][T_{\ell}][2]
\lesssim \frac{p_j^2}{h_\kappa} \sum_{\ell=1}^{n_\kappa} \normL[u][2][T_{\ell}][2]
\added{\leq} \frac{p_j^2}{h_\kappa} \normL[u][2][\kappa][2],
\end{equation*}
as required. Here, in the first inequality we have employed the following classical trace-inverse estimate on $d$-simplicial elements
\begin{equation*}
\normL[u][2][F][2]
\lesssim p_j^2\frac{\abs{F}}{\abs{T_\ell}}\normL[u][2][T_{\ell}][2],
\end{equation*}
cf. \cite{Sch98,Geo08}, for example; the second bound exploits
Assumption~\ref{hyp:grid_new}, namely, that 
$\abs{F}{\abs{T_\ell}}^{-1} \lesssim dh^{-1}_{\elem}$.}
\end{proof}
}

Next, we endow the finite element spaces $V_j$, $j=1,\dots,J$, with the following DG norm:
\begin{equation}
\normDG[w][j][2]=\sum_{\elem\in\mcal[T][j]}\int_\elem |\nabla w|^2\ dx + \sum_{F\in\mcal[F][j]} \int_F \sigma_j|\jump{w}|^2\ ds.
\end{equation}
The well--posedness of the DG formulation is established in the following lemma
\begin{lemma}
\label{lem:contcoerc}
\added{Assume that the sequence of meshes $\mcal[T][j]$, $j=1,\dots,J$, satisfies Assumption~\ref{hyp:grid_new} and that the constant $C_\sigma^j$, $j=1,\dots,J$, appearing in the definition \eqref{eq:stabilization_function} of the stabilization function is chosen sufficiently large.} Then, the following continuity and coercivity bounds, respectively, hold
\begin{alignat}{3}
\Aa[j][u][v]&\leq \Ccont  \normDG[u][j]\normDG[v][j]\quad &&\forall u,v\in V_j, \label{cont}\\
\Aa[j][u][u]&\geq \Ccoer\normDG[u][j][2]\quad &&\forall u\in V_j,\label{coerc}
\end{alignat}
where $\Ccont$ and $\Ccoer$ are positive constants, independent of the discretization parameters.
\end{lemma}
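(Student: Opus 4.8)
The plan is to reduce both estimates to a single local bound on the consistency (flux) terms of $\Aa[j][\cdot][\cdot]$, namely the face integrals involving $\average{\nabla u}$, since the remaining volume and penalty contributions coincide termwise with the two pieces of $\normDG[\cdot][j][2]$. The only nontrivial ingredient is the control of $\average{\nabla u}$ on the faces, and this is exactly what the trace-inverse inequality of Lemma~\ref{lem:inversepoly} provides, uniformly in $h_j$ and $p_j$.

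For the continuity bound \eqref{cont} I would estimate $\Aa[j][u][v]$ term by term. The volume term is bounded by $\sum_{\elem}\normL[\nabla u][2][\elem]\,\normL[\nabla v][2][\elem]$, and hence by $\normDG[u][j]\normDG[v][j]$ via Cauchy--Schwarz; the penalty term is bounded directly by the penalty seminorms of $u$ and $v$. For each consistency term I would insert the weight $\sigma_j^{\pm 1/2}$ and apply the discrete Cauchy--Schwarz inequality over the faces, obtaining
\[
\Big|\sum_{F\in\mcal[F][j]}\int_F \average{\nabla u}\cdot\jump{v}\,ds\Big|\le\Big(\sum_{F\in\mcal[F][j]}\int_F \sigma_j^{-1}|\average{\nabla u}|^2\,ds\Big)^{1/2}\Big(\sum_{F\in\mcal[F][j]}\int_F \sigma_j|\jump{v}|^2\,ds\Big)^{1/2}.
\]
The second factor is at most $\normDG[v][j]$, so everything reduces to bounding the first factor by $\normDG[u][j]$ up to a constant.

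This first factor is the heart of the argument. I would bound the average by its one-sided traces, $|\average{\nabla u}|^2\le \tfrac12(|\nabla u^+|^2+|\nabla u^-|^2)$, and use that, from \eqref{eq:stabilization_function}, $\sigma_j^{-1}\le (C_\sigma^j)^{-1} h_\elem/p_j^2$ on each face $F\subset\partial\elem$ (the max in the definition of $\sigma_j$ only helps here). Regrouping the resulting face integrals elementwise and applying Lemma~\ref{lem:inversepoly} componentwise to $\nabla u\in[\mcal[P][p_j](\elem)]^d$, which gives $\normL[\nabla u][2][\partial\elem][2]\le \CINVnew{j}(p_j^2/h_\elem)\normL[\nabla u][2][\elem][2]$, yields
\[
\sum_{F\in\mcal[F][j]}\int_F \sigma_j^{-1}|\average{\nabla u}|^2\,ds\lesssim \frac{\CINVnew{j}}{C_\sigma^j}\sum_{\elem\in\mcal[T][j]}\normL[\nabla u][2][\elem][2]\le \frac{\CINVnew{j}}{C_\sigma^j}\normDG[u][j][2].
\]
The crucial point is that Lemma~\ref{lem:inversepoly} bounds the \emph{whole} boundary trace at once, independently of the number of faces of $\elem$ and of their measures relative to $|\elem|$; this is where the polytopic setting is absorbed, and no shape-regularity of the individual faces is needed. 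Combining with the Cauchy--Schwarz step proves \eqref{cont} with $\Ccont$ depending only on $\CINVnew{j}/C_\sigma^j$, hence independent of the discretization parameters.

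For the coercivity bound \eqref{coerc}, setting $v=u$ gives $\Aa[j][u][u]=\normDG[u][j][2]-2\sum_{F\in\mcal[F][j]}\int_F\average{\nabla u}\cdot\jump{u}\,ds$. Writing $G=\sum_{\elem}\normL[\nabla u][2][\elem][2]$ and $P^2=\sum_{F}\int_F\sigma_j|\jump{u}|^2\,ds$, the cross term is controlled exactly as above by $2(C\,\CINVnew{j}/C_\sigma^j)^{1/2}G^{1/2}P$, where $C$ denotes the hidden constant of the previous estimate, and a weighted Young's inequality absorbs it, yielding
\[
\Aa[j][u][u]\ge \tfrac12\, G+\Big(1-\tfrac{2C\,\CINVnew{j}}{C_\sigma^j}\Big)P^2.
\]
The main (and only) obstacle is therefore the sign of the last coefficient: it is here that the hypothesis ``$C_\sigma^j$ sufficiently large'' is used, since $\CINVnew{j}$ is itself independent of $p_j$ and $h_j$. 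Choosing $C_\sigma^j\ge 4C\,\CINVnew{j}$, say, makes both coefficients at least $\tfrac12$ and gives \eqref{coerc} with $\Ccoer=\tfrac12$, uniformly in the discretization parameters.
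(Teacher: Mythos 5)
Your proof is correct. The paper states this lemma without an explicit proof, deferring to the standard argument from \cite{Canetal14,CangianiDongGeorgoulisHouston_2016}, and your argument is exactly that intended route: weighted Cauchy--Schwarz on the consistency terms with the weight $\sigma_j^{\pm 1/2}$, the polytopic trace-inverse inequality of Lemma~\ref{lem:inversepoly} applied componentwise to $\nabla u$ after regrouping the face integrals elementwise (which is where Assumption~\ref{hyp:grid_new} enters and the independence of the number and relative size of faces is absorbed), and Young's inequality for coercivity, with the requirement that $C_\sigma^j$ be sufficiently large relative to $\CINVnew{j}$ serving only to keep the penalty coefficient positive.
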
\par

The proceeding error estimates are based on the following approximation result, which is a simplified version of the 
analogous bound presented in \cite[Proof of Theorem~5.2]{Canetal14}. To this end, we define 
$\mathcal{E}:H^s(\Omega) \rightarrow H^s({\mathbb R}^d)$, $s \in {\mathbb N}_0$, such that
$\mathcal{E}v|_{\Omega}=v$, to denote the extension operator
presented in Stein \cite{stein}.

\begin{lemma}
\label{lem:interpDG}
\textcolor{black}{
Assume that Assumptions~\ref{hyp:grid_new} and \ref{hyp:covering} hold. Let 
$v|_\elem\in H^{k}(\elem)$, $k>d/2$, such that $\mathcal{E}v|_{\mathcal{K}}\in H^{k}(\mathcal{K})$,
for each $\elem\in\mcal[T][j]$, $j=1,\dots,J$, where $\elem\subset\mathcal{K}$, $\mathcal{K}\in\mathcal{T}_j^{\sharp}$. 
Then there exists a projection operator $\widetilde{\Pi}_j: L^2(\Omega)\rightarrow V_j$ such that
\begin{align}
\label{interpDG}
\normDG[v-\tilde{\Pi}_jv][j]&\leq \Cinterpnew{j}\frac{h_j^{s-1}}{p_j^{k\textcolor{black}{-1-\mu/2}}}\normH[v][k][\Om],
\end{align}
where $s = \min\{p_j+1,k\}$, and the constant $\Cinterpnew{j}$ depends on the shape-regularity 
constant of the covering $\mathcal{T}_j^{\sharp}$, but is independent of the discretization parameters, 
as well as the number of faces per element and the relative measure of the faces. \textcolor{black}{Here, $\mu=0$ whenever a $p-$optimal interpolant can be constructed  and $\mu=1$ otherwise.}}
\end{lemma}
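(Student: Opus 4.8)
The plan is to take for $\widetilde{\Pi}_j$ the operator constructed in \cite{Canetal14}: one first applies the Stein extension $\mathcal{E}$ to obtain $\mathcal{E}v\in H^k(\mathbb{R}^d)$, with $\|\mathcal{E}v\|_{H^k(\mathbb{R}^d)}\lesssim\normH[v][k][\Om]$, and then, on each covering simplex $\mathcal{K}\in\mcal[T][j][\sharp]$ furnished by Assumption~\ref{hyp:covering}, applies a standard (Babu\v{s}ka--Suri type) $hp$ polynomial approximation operator $\widehat{\Pi}$ onto $\mcal[P][p_j](\mathcal{K})$ and restricts to the elements $\elem\subset\mathcal{K}$; the resulting operator maps $L^2(\Om)$ into $V_j$ and, as in \cite{Canetal14}, can be taken to be a projection. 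The role of the covering is that the simplicial estimates
\[
\|\mathcal{E}v-\widehat{\Pi}\mathcal{E}v\|_{H^q(\mathcal{K})}\lesssim \frac{(\operatorname{diam}\mathcal{K})^{s-q}}{p_j^{\,k-q}}\,\|\mathcal{E}v\|_{H^k(\mathcal{K})},\qquad q=0,1,
\]
become, after restriction to $\elem\subset\mathcal{K}$ and use of the bound $\operatorname{diam}\mathcal{K}\lesssim h_\elem$ from Assumption~\ref{hyp:covering}, the local estimates $\|v-\widetilde{\Pi}_jv\|_{H^q(\elem)}\lesssim h_\elem^{s-q}p_j^{-(k-q)}\|\mathcal{E}v\|_{H^k(\mathcal{K})}$ that I actually need.

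With the operator fixed, I would split $\normDG[v-\widetilde{\Pi}_jv][j][2]$ into the broken gradient contribution $\sum_{\elem\in\mcal[T][j]}\int_\elem|\nabla(v-\widetilde{\Pi}_jv)|^2\,dx$ and the weighted jump contribution $\sum_{F\in\mcal[F][j]}\int_F\sigma_j|\jump{v-\widetilde{\Pi}_jv}|^2\,ds$, and estimate each in turn. The gradient term is immediate from the local estimate with $q=1$: summing over $\elem$, using the finite--overlap property of the covering (Assumption~\ref{hyp:covering}), the $H^k$--stability of $\mathcal{E}$, and the quasi-uniformity $h_\elem\approx h_j$, I obtain a contribution of order $h_j^{s-1}p_j^{-(k-1)}\normH[v][k][\Om]$.

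The jump term is the delicate part, and it is here that the parameter $\mu$ is decided. Since $v\in H^k(\Om)$ with $k>d/2$ is continuous across interfaces, $\jump{v}=0$ on interior faces (on boundary faces the jump reduces to a single one-sided trace), so $\jump{v-\widetilde{\Pi}_jv}$ is controlled pointwise by the one-sided traces of the error on $\partial\elem$; recalling that $\sigma_j\approx p_j^2/h_\elem$, the task reduces to bounding $\sum_{\elem}(p_j^2/h_\elem)\|v-\widetilde{\Pi}_jv\|_{L^2(\partial\elem)}^2$. To pass from $\partial\elem$ to interior norms I would apply a multiplicative (Agmon-type) trace inequality on the polytope $\elem$, reduced to the sub-simplices of Assumption~\ref{hyp:grid_new}, in the form $\|w\|_{L^2(\partial\elem)}^2\lesssim h_\elem^{-1}\|w\|_{L^2(\elem)}^2+\|w\|_{L^2(\elem)}|w|_{H^1(\elem)}$. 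Inserting the local estimates with $q=0$ and $q=1$, the dominant cross term gives $\|v-\widetilde{\Pi}_jv\|_{L^2(\partial\elem)}^2\lesssim h_\elem^{2s-1}p_j^{-(2k-1)}\|\mathcal{E}v\|_{H^k(\mathcal{K})}^2$, and multiplying by $\sigma_j\approx p_j^2/h_\elem$ yields a jump contribution of order $h_j^{2(s-1)}p_j^{-(2k-3)}\normH[v][k][\Om][2]$, i.e.\ the rate $h_j^{s-1}p_j^{-(k-3/2)}$, which is exactly the case $\mu=1$. The main obstacle is precisely this bookkeeping of $p_j$-powers through the factor $\sigma_j\approx p_j^2/h_\elem$: the multiplicative trace inequality costs half a power of $p_j$, and only when a genuinely $p$-optimal trace estimate $\|v-\widetilde{\Pi}_jv\|_{L^2(F)}^2\lesssim h_\elem^{2s-1}p_j^{-2k}\|\mathcal{E}v\|_{H^k(\mathcal{K})}^2$ is available is this loss recovered, so that the jump term matches the gradient term and $\mu=0$. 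Taking the larger of the two rates (the jump term dominates when $\mu=1$) and once more invoking the overlap property, the stability of $\mathcal{E}$ and quasi-uniformity delivers the claimed bound with exponent $k-1-\mu/2$.
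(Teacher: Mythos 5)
The paper offers no proof of this lemma, simply importing it as a simplified form of \cite[Proof of Theorem~5.2]{Canetal14}, and your reconstruction is exactly the argument of that reference: Stein extension plus $hp$-approximation on the covering simplices of Assumption~\ref{hyp:covering}, with the broken gradient term giving the optimal rate and the penalized jump term, handled through the multiplicative trace inequality on the sub-simplices of Assumption~\ref{hyp:grid_new}, producing the half-power loss in $p_j$ that is encoded in $\mu$. Your bookkeeping of the $p_j$-powers (the cross term $h_\elem^{2s-1}p_j^{-(2k-1)}$ multiplied by $\sigma_j\approx p_j^2/h_\elem$ yielding $p_j^{-2(k-3/2)}$) is correct and matches the stated exponent $k-1-\mu/2$.
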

\par
Next, we state error bounds for the underlying interior penalty DG scheme in terms of both the DG and $L^2(\Omega)$-norms.
\begin{theorem}
\label{thm:errors}
\textcolor{black}{
Assume that Assumptions~\ref{hyp:grid_new}  and \ref{hyp:covering} hold. Denote by 
$u_j\in V_j$, $j=1,\ldots,J$, the DG solution of problem \eqref{DGfem} posed on level $j$, \emph{i.e.}, 
$$\Aa[j][u_j][v_j] = \int_\Om fv_j\ dx\quad\forall v_j\in V_j.$$ 
If the solution $u$ of \eqref{weak} satisfies $u|_\elem\in H^k(\elem)$, $k>1+d/2$, such that
$\mathcal{E}u|_{\mathcal{K}}\in H^{k}(\mathcal{K})$,
for each $\elem\in\mcal[T][j]$, $j=1,\dots,J$, where $\kappa\subset\mathcal{K}$, $\mathcal{K}\in\mathcal{T}_j^{\sharp}$,
then the following bounds hold
\begin{align}
\normDG[u-u_j][j]&\leq \CGnew{j}\frac{h_j^{s-1}}{p_j^{k\textcolor{black}{-1-\mu/2}}}\normH[u][k][\Om],\label{DGerror}\\
\normL[u-u_j][2][\Om]&\leq \CLtwonew{j}\frac{\h{j}^{s}}{p_j^{k\textcolor{black}{-\mu}}}\normH[u][k][\Om],\label{L2error}
\end{align}
where $s = \min\{p_j+1,k\}$ and the constants $\CGnew{j}$ and $\CLtwonew{j}$ are independent of the discretization parameters. \textcolor{black}{Here, $\mu=0$ whenever a $p-$optimal interpolant can be constructed  and $\mu=1$ otherwise.}
}
\end{theorem}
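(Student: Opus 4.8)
The plan is to derive both error bounds from the approximation result of Lemma~\ref{lem:interpDG} together with the continuity and coercivity of the bilinear form established in Lemma~\ref{lem:contcoerc}. First I would prove the DG-norm bound \eqref{DGerror} via a standard C\'ea-type argument. Since $V_j\subset V$ in a broken sense and the discrete solution $u_j$ satisfies Galerkin orthogonality $\Aa[j][u-u_j][v_j]=0$ for all $v_j\in V_j$ (this requires the consistency of the symmetric interior penalty method, which holds because the exact solution $u$ is sufficiently regular, $k>1+d/2$, so the normal derivative traces are well defined and the jumps of $u$ vanish across interior faces), I would estimate
\begin{equation*}
\Ccoer\normDG[u-u_j][j][2]\leq \Aa[j][u-u_j][u-u_j]=\Aa[j][u-u_j][u-\widetilde{\Pi}_jv],
\end{equation*}
where I have inserted $\widetilde{\Pi}_j u\in V_j$ using orthogonality. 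Applying continuity \eqref{cont} and dividing by $\normDG[u-u_j][j]$ yields $\normDG[u-u_j][j]\leq (\Ccont/\Ccoer)\normDG[u-\widetilde{\Pi}_ju][j]$, at which point \eqref{DGerror} follows directly from the interpolation estimate \eqref{interpDG} with $\CGnew{j}=(\Ccont/\Ccoer)\Cinterpnew{j}$.

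The $L^2(\Omega)$-bound \eqref{L2error} I would obtain through a duality (Aubin--Nitsche) argument. I introduce the auxiliary problem: find $z\in V$ such that $\Aa[j][v][z]=\int_\Omega (u-u_j)v\,dx$ for all $v$ in the appropriate broken space; by the symmetry of the interior penalty form and elliptic regularity on the convex domain $\Omega$, the dual solution $z$ satisfies $\normH[z][2][\Omega]\lesssim \normL[u-u_j][2][\Omega]$. Testing with $v=u-u_j$ gives $\normL[u-u_j][2][\Omega][2]=\Aa[j][u-u_j][z]$, and using Galerkin orthogonality to subtract $\widetilde{\Pi}_jz$, then continuity, I arrive at
\begin{equation*}
\normL[u-u_j][2][\Omega][2]\leq \Ccont\,\normDG[u-u_j][j]\,\normDG[z-\widetilde{\Pi}_jz][j].
\end{equation*}
Applying \eqref{interpDG} to $z$ with $k=2$ (so $s=\min\{p_j+1,2\}=2$ for $p_j\geq1$) produces a factor $h_j/p_j^{1-\mu/2}$ on the dual term, which combines with the already-established DG estimate \eqref{DGerror} and the regularity bound on $z$ to give the stated power $h_j^s/p_j^{k-\mu}$ after dividing through by $\normL[u-u_j][2][\Omega]$.

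The main obstacle is the careful bookkeeping of the $p_j$-powers in the duality step so as to land exactly on $p_j^{k-\mu}$ rather than an unbalanced exponent, since the dual interpolation contributes $p_j^{-(1-\mu/2)}$ and the primal contributes $p_j^{-(k-1-\mu/2)}$, whose product is $p_j^{-(k-\mu)}$ as claimed. A secondary subtlety is that the duality argument requires the polygonal/polyhedral analogue of elliptic $H^2$-regularity and a trace/extension estimate compatible with the covering $\mathcal{T}_j^\sharp$ from Assumption~\ref{hyp:covering}; one must verify that the hidden constants there remain independent of the number of faces per element and of the relative face measures, exactly as emphasised in Lemma~\ref{lem:interpDG}. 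Provided these structural facts are invoked from \cite{Canetal14,CangianiDongGeorgoulis_2016}, both estimates follow in the manner sketched, with the constants $\CGnew{j}$ and $\CLtwonew{j}$ inheriting only the shape-regularity dependence of the covering.
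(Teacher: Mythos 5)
Your duality argument for the $L^2(\Omega)$ bound \eqref{L2error} is essentially the paper's proof verbatim: the same dual problem, the same elliptic regularity bound $\normH[w][2]\lesssim\normL[u-u_j]$, the same insertion of $\widetilde{\Pi}_j$ of the dual solution via Galerkin orthogonality, and the same exponent count $(1-\mu/2)+(k-1-\mu/2)=k-\mu$. The only genuine divergence concerns \eqref{DGerror}: the paper does not prove it at all, but cites it directly from \cite[Theorem~5.2]{Canetal14}, whereas you sketch a C\'ea-type argument from coercivity, Galerkin orthogonality and continuity. That route is standard, but as written it has a gap: Lemma~\ref{lem:contcoerc} states continuity \eqref{cont} only for arguments in $V_j$, and for $u-u_j$ with $u\notin V_j$ the face terms $\int_F\average{\nabla u}\cdot\jump{v}\,ds$ of \Aa[j] are not controlled by $\|\cdot\|_{DG,j}$; one must work with an augmented norm (including suitably weighted face averages of $\nabla u$) together with a correspondingly strengthened interpolation estimate, which is precisely how \cite{Canetal14} obtains the bound. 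The paper's own duality step makes the same informal use of continuity for non-discrete arguments (writing $\lesssim$ rather than invoking \eqref{cont}), so this does not invalidate your $L^2$ argument, but it is the reason the paper outsources \eqref{DGerror} to the reference rather than running the two-line C\'ea argument. A minor typo: $\widetilde{\Pi}_j v$ in your first display should read $\widetilde{\Pi}_j u$.
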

\added{Before proceeding with the proof, we point out that the above error bounds 
\paul{hold provided Assumptions~\ref{hyp:grid_new} and~\ref{hyp:covering} are satisfied;
however, we stress that no limitation is placed on the maximum number of 
faces that each polygonal/polyhedral element may possess. Moreover, there is no
restriction on the relative size of each face of an element compared to its diameter.}}
\begin{proof}
\textcolor{black}{
The error bound \eqref{DGerror} stems from the general result derived in 
\cite[Theorem~5.2]{Canetal14} \paul{under the condition that 
Assumptions~\ref{hyp:grid_new} and~\ref{hyp:covering} hold}. Thereby, we now
proceed with the proof of the bound on the $L^2(\Omega)$-norm of the error, cf. \eqref{L2error}. To this end, we
employ a standard duality argument: let $w\in V$, be the solution of the problem
\begin{equation*}
\paul{\Aa[j][v][w]} = \int_\Omega (u-u_j)v\ dx\qquad \forall v\in V,
\label{adjoint}
\end{equation*}
$j=1,\ldots,J$.
Exploiting a standard elliptic regularity assumption, we note that
\begin{equation*}
\normH[w][2]\lesssim \normL[u-u_j].
\label{ellipticw}
\end{equation*}
According to Galerkin orthogonality, we immediately obtain
\begin{align*}
\normL[u-u_j][2][\Om][2] &= \Aa[j][u-u_j][w]\\
&= \Aa[j][u-u_j][w-w_I]\\
&\lesssim \normDG[u-u_j][j]\normDG[w-w_I][j]
\end{align*}
for all $w_I\in V_j$. Hence, selecting $w_I=\tilde{\Pi}_j w$, employing 
\eqref{interpDG} gives
\begin{equation}
\normDG[w-w_I][j]\lesssim\Cinterpnew{j}\frac{h_j}{p_j^{1/2}}\normH[w][2][\Om]
\lesssim \Cinterpnew{j}\frac{h_j}{p_j^{\textcolor{black}{1-\mu/2}}}\normL[u-u_j][2][\Om],
\end{equation}
which together with \eqref{DGerror} gives the desired result.
}
\end{proof}\par

\paul{
Equipped with Assumption~\ref{hyp:shape-regular-element}, we now quote the following result
from \cite{CangianiDongGeorgoulisHouston_2016}; for brevity the proof is omitted. 
However, we point out that the proof presented in \cite{CangianiDongGeorgoulisHouston_2016}
holds under slightly weaker mesh conditions; for simplicity of presentation, this level of
detail is omitted.
\begin{lemma}
\label{lem:inverse}
\textcolor{black}{Assume that Assumptions~\ref{hyp:shapereg}  and~\ref{hyp:shape-regular-element} hold}. Then, for any $v\in V_j$, $j=1,\ldots,J$, the following inverse estimate holds
\begin{equation}
\normL[\nabla u][2][\elem][2]\leq \CInew{j} p_j^4 h_\elem^{-2}\normL[u][2][\elem][2],
\end{equation}
where $\CInew{j}>0$ is independent of the discretization parameters.
\end{lemma}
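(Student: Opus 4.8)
The plan is to reduce the estimate on the general polytopic element $\elem$ to a collection of standard inverse inequalities on shape-regular simplices, exploiting Assumption~\ref{hyp:shape-regular-element}. Fix $\elem\in\mcal[T][j]$ and let $u\in V_j$, so that $u|_\elem\in\mcal[P][p_j](\elem)$. By Assumption~\ref{hyp:shape-regular-element} there is a sub-triangulation $\bar\elem=\cup_{i=1}^{m_\kappa}\bar{\frak{s}}_i$ into shape-regular simplices with $|\frak{s}_i|\gtrsim|\elem|$. On each such simplex the classical $hp$-version inverse (Markov-type) inequality holds,
\begin{equation*}
\normL[\nabla u][2][\frak{s}_i][2]\lesssim p_j^4\, h_{\frak{s}_i}^{-2}\,\normL[u][2][\frak{s}_i][2],
\end{equation*}
where $h_{\frak{s}_i}=\mathrm{diam}(\frak{s}_i)$ and the hidden constant depends only on $d$ and the shape-regularity constant of $\frak{s}_i$; see, e.g., \cite{Sch98,CangianiDongGeorgoulisHouston_2016}. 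The factor $p_j^4$ arises by squaring the degree-$p_j$ Markov bound $\|\nabla\hat u\|_{L^2(\hat{\frak{s}})}\lesssim p_j^2\|\hat u\|_{L^2(\hat{\frak{s}})}$ on the reference simplex, after the usual affine scaling that produces the factor $h_{\frak{s}_i}^{-2}$.

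The key step is then to show that every sub-simplex has diameter comparable to that of the host element, i.e. $h_{\frak{s}_i}\approx h_\elem$. The upper bound $h_{\frak{s}_i}\leq h_\elem$ is immediate from $\frak{s}_i\subseteq\elem$. For the lower bound, shape-regularity of $\frak{s}_i$ gives $|\frak{s}_i|\approx h_{\frak{s}_i}^d$, while Assumption~\ref{hyp:shape-regular-element} yields $|\frak{s}_i|\gtrsim|\elem|$ and Assumption~\ref{hyp:shapereg} yields $|\elem|\gtrsim h_\elem^d$. Combining these,
\begin{equation*}
h_{\frak{s}_i}^d\approx|\frak{s}_i|\gtrsim|\elem|\gtrsim h_\elem^d,
\end{equation*}
whence $h_{\frak{s}_i}\gtrsim h_\elem$, and therefore $h_{\frak{s}_i}\approx h_\elem$ with constants depending only on $d$ and the shape-regularity constants. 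In particular $h_{\frak{s}_i}^{-2}\lesssim h_\elem^{-2}$.

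Finally, since the simplices $\frak{s}_i$ tile $\elem$ up to a set of measure zero, summing the local inverse estimates gives
\begin{equation*}
\normL[\nabla u][2][\elem][2]=\sum_{i=1}^{m_\kappa}\normL[\nabla u][2][\frak{s}_i][2]\lesssim p_j^4\sum_{i=1}^{m_\kappa}h_{\frak{s}_i}^{-2}\normL[u][2][\frak{s}_i][2]\lesssim p_j^4\,h_\elem^{-2}\sum_{i=1}^{m_\kappa}\normL[u][2][\frak{s}_i][2]=p_j^4\,h_\elem^{-2}\normL[u][2][\elem][2],
\end{equation*}
which is the claimed bound with $\CInew{j}$ collecting the shape-regularity and dimension constants. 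Note that because the simplices form a partition of $\elem$, no multiplicative factor of $m_\kappa$ appears, so $\CInew{j}$ is independent of the number of sub-simplices and of the discretization parameters. I expect the main obstacle to be the clean derivation of $h_{\frak{s}_i}\approx h_\elem$: this is precisely where Assumptions~\ref{hyp:shapereg} and~\ref{hyp:shape-regular-element} are both indispensable, since without the volume lower bound $|\frak{s}_i|\gtrsim|\elem|\gtrsim h_\elem^d$ a sub-simplex could in principle be arbitrarily small relative to $h_\elem$, which would blow up the constant $h_{\frak{s}_i}^{-2}$ and destroy the uniformity of the estimate.
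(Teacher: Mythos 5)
Your argument is correct: the reduction to the sub-triangulation of Assumption~\ref{hyp:shape-regular-element}, the simplex-wise $hp$ inverse estimate with factor $p_j^4 h_{\frak{s}_i}^{-2}$, the comparison $h_{\frak{s}_i}\approx h_{\elem}$ via $h_{\frak{s}_i}^d\approx|\frak{s}_i|\gtrsim|\elem|\gtrsim h_{\elem}^d$, and the loss-free summation over the tiling are all sound, and you correctly identify where each assumption enters. The paper itself omits the proof and simply quotes the result from the cited reference, where essentially this same sub-triangulation argument is used, so your proposal matches the intended route.
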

}
The inverse estimate presented in Lemma~\ref{lem:inverse} is fundamental to the proof of the 
following upper bound on the maximum eigenvalue of \Aa. We recall that the analogous result on standard grids can be found in \cite{AntHou}, \added{cf. also \cite{AntHou13}}.
\begin{theorem}
\label{thm:eig}
\textcolor{black}{
Given that Assumptions~\ref{hyp:grid_new}, \ref{hyp:shapereg}, 
\paul{and~\ref{hyp:shape-regular-element}} hold, then for any 
$u\in V_j$, $j=1,\ldots,J$, we have that
\begin{equation}
\label{eig}
\Aa[j][u][u]\leq \Ceig{j}\frac{p_j^4}{\hmin{j}^2} \normL[u][2][\Om][2],
\end{equation}
where the constant $\Ceig{j}$ is independent of the discretization parameters.
}
\end{theorem}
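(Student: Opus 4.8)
The plan is to combine the continuity of $\Aa$ established in Lemma~\ref{lem:contcoerc} with the two inverse-type estimates already at our disposal. Since continuity gives $\Aa[j][u][u] \le \Ccont \normDG[u][j][2]$, with $\Ccont$ independent of the discretization parameters, it suffices to prove that the DG norm itself satisfies
\begin{equation*}
\normDG[u][j][2] \lesssim \frac{p_j^4}{\hmin{j}^2}\normL[u][2][\Om][2]
\qquad \forall\, u\in V_j,
\end{equation*}
after which \eqref{eig} follows immediately. Recalling that $\normDG[u][j][2] = \sum_{\elem\in\mcal[T][j]}\normL[\nabla u][2][\elem][2] + \sum_{F\in\mcal[F][j]}\int_F \sigma_j|\jump{u}|^2\,ds$, I would estimate the volume contribution and the penalty contribution separately.

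For the volume term, I would apply the inverse estimate of Lemma~\ref{lem:inverse} on each element $\elem\in\mcal[T][j]$, yielding $\normL[\nabla u][2][\elem][2] \le \CInew{j}\, p_j^4\, h_\elem^{-2}\normL[u][2][\elem][2]$; summing over $\elem\in\mcal[T][j]$ and invoking the quasi-uniformity of $\mcal[T][j]$ (so that $h_\elem \approx \hmin{j}$, hence $h_\elem^{-2}\lesssim \hmin{j}^{-2}$) gives $\sum_\elem \normL[\nabla u][2][\elem][2] \lesssim p_j^4\, \hmin{j}^{-2}\normL[u][2][\Om][2]$. This is precisely the step in which Lemma~\ref{lem:inverse} is, as announced, the fundamental ingredient.

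For the penalty term, I would first use the definition \eqref{eq:stabilization_function} together with quasi-uniformity to bound $\sigma_j \lesssim p_j^2/\hmin{j}$ uniformly over $\mcal[F][j]$, and the elementary estimate $|\jump{u}|^2 \lesssim |u^+|^2 + |u^-|^2$ on each face, so that $\sum_F \int_F \sigma_j |\jump{u}|^2\,ds \lesssim (p_j^2/\hmin{j}) \sum_{\elem} \normL[u][2][\partial\elem][2]$, each face being shared by at most two elements. Applying the trace--inverse inequality of Lemma~\ref{lem:inversepoly}, namely $\normL[u][2][\partial\elem][2] \le \CINVnew{j}\,(p_j^2/h_\elem)\, \normL[u][2][\elem][2]$, and once more quasi-uniformity, produces the second factor of $p_j^2/\hmin{j}$ and hence $\sum_F \int_F \sigma_j |\jump{u}|^2\,ds \lesssim (p_j^4/\hmin{j}^2)\normL[u][2][\Om][2]$. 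Combining the two contributions yields the claimed DG-norm bound, and thus \eqref{eig}.

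The main obstacle is really only a matter of bookkeeping the powers of $p_j$ and $\hmin{j}$: invoking continuity sidesteps the delicate consistency term $\sum_F \int_F \average{\nabla u}\cdot\jump{u}\,ds$ entirely, since it is already absorbed in the proof of Lemma~\ref{lem:contcoerc}. Were one instead to bound $\Aa[j][u][u]$ directly, this consistency term would be the crux; I would control it by the weighted Young inequality $2\,\average{\nabla u}\cdot\jump{u} \le \sigma_j^{-1}|\average{\nabla u}|^2 + \sigma_j|\jump{u}|^2$, reducing it to the penalty term already treated plus the quantity $\sum_F \int_F \sigma_j^{-1}|\average{\nabla u}|^2\,ds$. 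Since $\sigma_j^{-1}\lesssim \hmin{j}/p_j^2$, applying Lemma~\ref{lem:inversepoly} componentwise to $\nabla u$ (whose components lie in $\mcal[P][p_j](\elem)$) and then Lemma~\ref{lem:inverse} again returns a bound of order $p_j^4/\hmin{j}^2$, fully consistent with the continuity-based argument.
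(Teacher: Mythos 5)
Your proposal is correct and follows essentially the same route as the paper: continuity from Lemma~\ref{lem:contcoerc} reduces the claim to bounding the DG norm, the volume term is handled by Lemma~\ref{lem:inverse} together with quasi-uniformity, and the penalty term is handled by the trace--inverse inequality of Lemma~\ref{lem:inversepoly} combined with the definition of $\sigma_j$. The closing remark on treating the consistency term directly via a weighted Young inequality is a valid alternative but is not needed, exactly as you observe.
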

\begin{proof}
\textcolor{black}{
Given the continuity of the bilinear forms \Aa[j] stated in Lemma~\ref{lem:contcoerc}, we restrict ourselves to estimate the 
two terms involved in the DG norm. The local contributions of the $H^1$ seminorm can be simply bounded by applying 
Lemma~\ref{lem:inverse} and the quasi-uniformity of the partition, \emph{i.e.}, 
\begin{equation}
\label{DGinv}
\sum_{\elem\in\mcal[T][j]}|u|_{H^1(\elem)}^2
\leq\sum_{\elem\in\mcal[T][j]} \CInew{j} p_j^4 h_\elem^{-2}\normL[u][2][\elem][2]\leq 
\textcolor{black}{\left(\max_{\elem\in\mcal[T][j]} \CInew{j}\right)}\frac{p_j^4}{h_j^{2}}\normL[u][2][\Om][2].
\end{equation}
\paul{
To bound the norm of the jump across $F\in\mcal[F][j]$, we employ the 
inverse inequality \eqref{inversepoly}; thereby, we get
\begin{align}
\sum_{F\in \mcal[F][j]} \normL[\sigma_j^{1/2}\jump{u}][2][F][2]
\lesssim& ~\sum_{\kappa\in \mcal[T][j]} 
	\normL[\sigma_j^{1/2}\jump{u}][2][\partial\kappa][2] 
\lesssim ~ C^j_\sigma \CINVnew{j} \frac{p_j^4}{h_j^2}  \normL[u][2][\Omega][2].
\end{align}
}
The statement of the theorem immediately follows based on summing the above bounds.
}
\end{proof}\par

The theoretical results derived in this section form the basis of the analysis of the proposed multigrid 
algorithms presented in the following section.


\section{Two-level and W-cycle multigrid algorithms} 
\label{sec:mulgr}

The forthcoming analysis is based on the classical multigrid theoretical framework already employed in \cite{AntSarVer15} 
for high-order DG schemes on standard quasi-uniform meshes. The two key ingredients in the construction of our 
proposed multigrid schemes are the inter-grid transfer operators and the smoothing scheme. 
The prolongation operator connecting the space $V_{j-1}$ to $V_{j}$, $j=2,\ldots,J$,
 is denoted by $I^{j}_{j-1}:V_{j-1}\rightarrow V_{j}$, while its adjoint with respect to the $L^2(\Omega)$-inner product 
 $(\cdot,\cdot)$ is the restriction operator $I^{j-1}_{j}:V_{j}\rightarrow V_{j-1}$ 
 \paul{defined by}
$$
(I^{j}_{j-1}v,w)  = (v,I^{j-1}_{j}w)\qquad \forall v\in V_{j-1},w\in V_{j}.
$$
As a smoothing scheme, we choose a Richardson iteration, whose operator is defined as:
\begin{equation}
\label{defsmooth}
B_j = \Lambda_j\textnormal{Id}_j,
\end{equation}
with $\textnormal{Id}_j$ the identity operator on level $V_j$, and $\Lambda_j\in\mathbb{R}$ is an upper bound 
for the spectral radius of the operator $A_j:V_j\rightarrow V_j$, defined by
\begin{equation}
\label{defAj}
(A_ju,v)=\Aa[j][u][v]\quad\forall u,v\in V_j, ~~j=1,\ldots,J.
\end{equation}
For the definition of the solvers, we first address the two-level method. Given the problem $A_J u_J = f_J$
with $A_J:V_J\rightarrow V_J$ defined according to \eqref{defAj}, and $f_J\in V_J$ such that 
$$(f_J,v) = \int_\Om f v\ dx\quad \forall v\in V_J,$$
in Algorithm~\ref{alg:2lvl}  we outline the two-level cycle, where $\mathsf{MG_{2lvl}} (z_0,m_1,m_2)$ denotes the approximate solution obtained after one iteration, with initial guess $z_0$ and $m_1$, $m_2$ pre- and post-smoothing steps, respectively.
\begin{algorithm}[t!]
\caption{Two-level scheme}
\label{alg:2lvl}
\begin{algorithmic}
\State \underline{{\it  Pre-smoothing}}:
\For{$i=1,\dots,m_1$} \State $z^{(i)}=z^{(i-1)}+B_J^{-1}(f_J-A_Jz^{(i-1)});$ \EndFor\vspace{0.3cm}
\State \underline{{\it  Coarse grid correction}}:
\State $r_{J-1} = I_{J}^{J-1}(f_J-A_Jz^{(m_1)})$;
\State $e_{J-1} = A_{J-1}^{-1}r_{J-1}$;
\State $z^{(m_1+1)}=z^{(m_1)}+I_{J-1}^{J}e_{J-1}$;\vspace{0.3cm}
\State \underline{{\it  Post-smoothing}}:
\For{$i=m_1+2,\dots,m_1+m_2+1$} \State $z^{(i)}=z^{(i-1)}+B_J^{-1}(f_J-A_Jz^{(i-1)});$ \EndFor\vspace{0.3cm}
\State $\mathsf{MG_{2lvl}} (z_0,m_1,m_2)=z^{(m_1+m_2+1)}.$
\end{algorithmic}
\end{algorithm}

As a multilevel extension of Algorithm~\ref{alg:2lvl}, we consider a standard W-cycle scheme. On level $j$, we consider $A_j z=g$,
for a given $g\in V_j$. The approximate solution obtained by applying the $j$-th level iteration to the above linear system, 
with initial guess $z_0$ and  $m_1$, $m_2$ pre- and post-smoothing steps, respectively, is 
denoted by $\mathsf{MG}_\mathcal{W} (j,g,z_0,m_1,m_2)$. On the coarsest level $j = 1$, the corresponding subproblem is solved  
based on employing a direct method, \emph{i.e.},
$$\mathsf{MG}_\mathcal{W} (1,g,z_0,m_1,m_2) = A_1^{-1}g,$$
while for $j>1$ we apply the recursive procedure outlined in Algorithm~\ref{alg:multilevel}.
We observe that Algorithm~\ref{alg:2lvl} can be considered as a special case of Algorithm~\ref{alg:multilevel}, 
corresponding to $J=2$.

\begin{algorithm}[t!]
\caption{Multigrid W-cycle scheme}
\label{alg:multilevel}
\begin{algorithmic}
\If{$j=1$}
\State \textcolor{black}{$\mathsf{MG}_\mathcal{W} (1,g,z_0,m_1,m_2)=A_1^{-1}g.$}
\Else
\State \underline{{\it  Pre-smoothing}}:
\For{$i=1,\dots,m_1$} \State $z^{(i)}=z^{(i-1)}+B_j^{-1}(g-A_jz^{(i-1)});$ \EndFor\vspace{0.3cm}
\State \underline{{\it  Coarse grid correction}}:
\State $\textcolor{black}{r_{j-1}} = I_j^{j-1}(g-A_jz^{(m_1)})$;
\State $\overline{e}_{j-1} = \mathsf{MG}_\mathcal{W} (j-1,r_{j-1},0,m_1,m_2)$;
\State $e_{j-1} = \mathsf{MG}_\mathcal{W} (j-1,r_{j-1},\overline{e}_{j-1},m_1,m_2)$;
\State $z^{(m_1+1)}=z^{(m_1)}+I_{j-1}^je_{j-1}$;\vspace{0.3cm}
\State \underline{{\it  Post-smoothing}}:
\For{$i=m_1+2,\dots,m_1+m_2+1$} \State $z^{(i)}=z^{(i-1)}+B_j^{-1}(g-A_jz^{(i-1)});$ \EndFor\vspace{0.3cm}
\State $\mathsf{MG}_\mathcal{W} (j,g,z_0,m_1,m_2)=z^{(m_1+m_2+1)}.$
\EndIf
\end{algorithmic}
\end{algorithm}

\subsection{Convergence analysis of the two-level method}
\label{sec:2lvlconv}
We first define the following norms based on the operator $A_j$, $j=1,\dots,J$,
\begin{equation}
\label{discrnorm}
\ltrivert v\rtrivert_{s,j}=\sqrt{(A_j^sv,v)_j}\qquad \forall s\in\textcolor{black}{\mathbb{N}\cup \left\{0\right\}},\ v\in V_j,\quad j=1,\dots,J.
\end{equation}
Hence, 
\begin{equation}
\label{norm1Aj}
\ltrivert v\rtrivert_{1,j}^2 = 
(A_jv,v)_j=\Aa[j][v][v],\quad \ltrivert v\rtrivert_{0,j}^2 =(v,v)_j=\|v\|_{L^2(\Omega)}^2\quad\forall v\in V_j.
\end{equation}

In order to undertake the convergence analysis of the two-level solver outlined in Algorithm~\ref{alg:2lvl}, we follow the 
approach developed in \cite{AntSarVer15}. We then provide an estimate based on the error propagation operator,
which is defined by
\begin{equation}
\label{E2lvl}
\mathbb{E}_{m_1,m_2}^{\mathsf{2lvl}}v = G_J^{m_2}(\textnormal{Id}_J-I_{J-1}^JP_J^{J-1})G_J^{m_1},
\end{equation}
with $G_J = \textnormal{Id}_J-B_J^{-1}A_J$, and the operator $P_{J}^{J-1}:V_{J}\rightarrow V_{J-1}$ defined as
\begin{equation}
\Aa[J-1][P_{J}^{J-1}v][w] = \Aa[J][v][I^{J}_{J-1}w]\qquad \forall v\in V_{J}, w\in V_{J-1}.
\label{project}
\end{equation}
We now study separately the {\it  smoothing property} and the {\it  approximation property}. 
We also point out that that by Theorem~\ref{thm:eig}, we can bound $\Lambda_j$, $j=1,\ldots,J$, 
in \eqref{defsmooth} as follows
\begin{equation}
\label{Lambda}
\Lambda_j\lesssim \Ceig{j}\frac{p_j^4}{\hmin{j}^2}.
\end{equation}
\par
The last result is employed to prove the smoothing property in the next lemma; see \cite[Lemma~4.3]{AntSarVer15} for the proof. 
\begin{lemma}[Smoothing property]
\label{lem:smooth}
\textcolor{black}{Given that Assumptions~\ref{hyp:grid_new}, \ref{hyp:shapereg}, 
and~\ref{hyp:shape-regular-element} hold, then} for any $v\in V_j$, $j=1,\ldots,J$, we have
\begin{equation}
\begin{aligned}
\ltrivert G_j^mv\rtrivert_{1,j}&\leq\ltrivert v\rtrivert_{1,j},\\
\ltrivert G_j^mv\rtrivert_{s,j} &\lesssim \Ceig{j}^{(s-t)/2}p_j^{2(s-t)}\hmin{j}^{t-s}(1+m)^{(t-s)/2}\ltrivert v \rtrivert_{t,j},
\label{smoothingprop}
\end{aligned}
\end{equation}
for $0\leq t< s\leq 2$ and $m\in\mathbb{N}\setminus\{0\}$.
\end{lemma}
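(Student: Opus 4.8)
The plan is to diagonalize the smoother and reduce everything to a scalar spectral estimate. I would first observe that, since the symmetric interior penalty form $\Aa[j]$ is symmetric, the operator $A_j$ defined in \eqref{defAj} is self-adjoint with respect to the $L^2(\Omega)$-inner product, while the coercivity bound \eqref{coerc} makes it positive definite. Hence $A_j$ admits an $L^2(\Omega)$-orthonormal basis of eigenfunctions $\{\phi_i\}$ with eigenvalues $\lambda_i>0$, and by Theorem~\ref{thm:eig} together with \eqref{Lambda} every eigenvalue satisfies $\lambda_i\leq\Lambda_j$. Writing $\mu_i=\lambda_i/\Lambda_j\in(0,1]$, the error propagation operator of the Richardson iteration, $G_j=\textnormal{Id}_j-\Lambda_j^{-1}A_j$, is diagonal in the same basis with eigenvalues $1-\mu_i\in[0,1)$.

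Expanding a generic $v\in V_j$ as $v=\sum_i c_i\phi_i$ gives $G_j^m v=\sum_i c_i(1-\mu_i)^m\phi_i$, so that the discrete norms admit the spectral representation
\[
\ltrivert G_j^m v\rtrivert_{s,j}^2=(A_j^sG_j^mv,G_j^mv)=\sum_i\lambda_i^s(1-\mu_i)^{2m}c_i^2.
\]
The first (energy stability) estimate would then follow at once: since $0\leq 1-\mu_i<1$ we have $(1-\mu_i)^{2m}\leq 1$, whence $\ltrivert G_j^mv\rtrivert_{1,j}^2=\sum_i\lambda_i(1-\mu_i)^{2m}c_i^2\leq\sum_i\lambda_ic_i^2=\ltrivert v\rtrivert_{1,j}^2$.

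For the second estimate I would set $\alpha=s-t\in(0,2]$ and, using $\lambda_i=\Lambda_j\mu_i$, factor $\lambda_i^s(1-\mu_i)^{2m}=\lambda_i^t\,\Lambda_j^{\alpha}\,\mu_i^{\alpha}(1-\mu_i)^{2m}$. The crux of the argument is the elementary scalar bound
\[
\max_{\mu\in[0,1]}\mu^{\alpha}(1-\mu)^{2m}\lesssim(1+m)^{-\alpha},
\]
with a constant depending only on $\alpha$; this is seen by calculus, the maximizer being $\mu^{\ast}=\alpha/(\alpha+2m)$, at which $(\mu^{\ast})^{\alpha}$ behaves like $(1+m)^{-\alpha}$ while $(1-\mu^{\ast})^{2m}=(2m/(\alpha+2m))^{2m}$ stays uniformly bounded (it converges to $e^{-\alpha}$). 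Inserting this bound and summing would yield
\[
\ltrivert G_j^mv\rtrivert_{s,j}^2\lesssim\Lambda_j^{\alpha}(1+m)^{-\alpha}\sum_i\lambda_i^tc_i^2=\Lambda_j^{\alpha}(1+m)^{-\alpha}\ltrivert v\rtrivert_{t,j}^2.
\]
Taking square roots and substituting the spectral-radius bound $\Lambda_j\lesssim\Ceig{j}\,p_j^4/\hmin{j}^2$ from \eqref{Lambda}, and recalling $\alpha=s-t$, reproduces exactly the claimed factor $\Ceig{j}^{(s-t)/2}p_j^{2(s-t)}\hmin{j}^{t-s}(1+m)^{(t-s)/2}$.

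The only genuine obstacle is the scalar maximization estimate; once $A_j$ is diagonalized, everything else is a routine spectral computation. Particular care is needed to confirm that the implied constant there depends solely on $\alpha\in(0,2]$, and neither on $m$ nor on the discretization parameters, so that the entire dependence on $p_j$ and $\hmin{j}$ enters only through $\Lambda_j$, exactly as tracked above.
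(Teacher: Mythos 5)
Your proof is correct and is essentially the argument the paper relies on: the paper omits the proof and refers to \cite[Lemma~4.3]{AntSarVer15}, where the same spectral diagonalization of the self-adjoint, positive definite operator $A_j$ is combined with the scalar bound $\max_{\mu\in[0,1]}\mu^{\alpha}(1-\mu)^{2m}\lesssim(1+m)^{-\alpha}$ and the eigenvalue estimate \eqref{Lambda}. All the details you flag (the maximizer $\mu^{\ast}=\alpha/(\alpha+2m)$, uniformity of the constant in $\alpha\in(0,2]$ and $m\geq 1$, and the dependence on $p_j$ and $\hmin{j}$ entering only through $\Lambda_j$) check out.
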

The {\it approximation property} \paul{stems from} 
exploiting the $L^2(\Omega)$ error estimates stated
in \eqref{L2error} on levels $J$ and $J-1$.
\begin{lemma}[Approximation property]
\label{lem:approx}
\textcolor{black}{Assume that Assumptions~\ref{hyp:grid_new} and \ref{hyp:covering} hold.} 
\textcolor{black}{Let $\mu$ be defined as in Lemma \ref{lem:interpDG}.}
For any $v\in V_J$, the following inequality holds
\begin{equation}
\begin{aligned}
& \ltrivert (\textnormal{Id}_J-I_{J-1}^JP_J^{J-1})v\rtrivert_{0,J}\lesssim (\CLtwonew{J} + \CLtwonew{J-1})\frac{\h{J-1}^2}{p_{J-1}^{\textcolor{black}{2-\mu}}}\ltrivert v\rtrivert_{2,J}.
&&
\end{aligned}
\label{approxprop}
\end{equation}
\end{lemma}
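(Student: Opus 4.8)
The plan is to reduce \eqref{approxprop} to the $L^2(\Omega)$ error bound \eqref{L2error}, applied on the two consecutive levels $J$ and $J-1$, by exhibiting both $v$ and $P_J^{J-1}v$ as the discrete DG approximations, on their respective levels, of one and the same auxiliary continuous problem. First I would fix $v\in V_J$ and set $g=A_Jv$. Since the symmetry of \Aa[J] makes $A_J$ self-adjoint with respect to $(\cdot,\cdot)$, the definition \eqref{discrnorm} gives $\ltrivert v\rtrivert_{2,J}^2=(A_J^2v,v)=(A_Jv,A_Jv)=\normL[g][2][\Om][2]$, so that the target bound \eqref{approxprop} reads, up to constants, $\normL[g][2][\Om]$ multiplied by $(\CLtwonew{J}+\CLtwonew{J-1})h_{J-1}^2/p_{J-1}^{2-\mu}$. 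By \eqref{defAj}, $v$ is characterised by $\Aa[J][v][\phi]=(g,\phi)$ for all $\phi\in V_J$.

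Next I would introduce the continuous auxiliary problem: let $u\in V=H^2(\Omega)\cap H_0^1(\Omega)$ solve $\int_\Omega\nabla u\cdot\nabla\phi\,dx=(g,\phi)$ for all $\phi\in V$. Since \Om is convex, elliptic regularity yields $\normH[u][2][\Om]\lesssim\normL[g][2][\Om]$. For this data, the identity $\Aa[J][v][\phi]=(g,\phi)$ on $V_J$ shows that $v$ is exactly the level-$J$ DG solution $u_J$ of Theorem~\ref{thm:errors}. For the coarse correction I would exploit the nestedness $V_{J-1}\subseteq V_J$: the prolongation $I_{J-1}^J$ is then the natural injection and $I_J^{J-1}$ the $L^2(\Omega)$-orthogonal projection, so $I_{J-1}^JP_J^{J-1}v=P_J^{J-1}v$. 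Moreover, taking $w\in V_{J-1}\subseteq V_J$ in \eqref{project} and using $\Aa[J][v][w]=(g,w)$ gives $\Aa[J-1][P_J^{J-1}v][w]=(g,w)$ for all $w\in V_{J-1}$; hence $P_J^{J-1}v$ is precisely the level-$(J-1)$ DG solution $u_{J-1}$ of the same continuous problem.

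With these identifications I would close the argument by the triangle inequality $\ltrivert(\textnormal{Id}_J-I_{J-1}^JP_J^{J-1})v\rtrivert_{0,J}=\normL[u_J-u_{J-1}][2][\Om]\leq\normL[u-u_J][2][\Om]+\normL[u-u_{J-1}][2][\Om]$, estimating each term through \eqref{L2error} with $k=2$ (so that $s=\min\{p_j+1,2\}=2$ since $p_j\geq1$) and inserting $\normH[u][2][\Om]\lesssim\normL[g][2][\Om]=\ltrivert v\rtrivert_{2,J}$. Finally, the bounded variation of the discretization parameters between consecutive levels, \eqref{hjhj1} and \eqref{pjpj1}, gives $h_J\leq h_{J-1}$ and $p_{J-1}\leq p_J$, whence $h_J^2/p_J^{2-\mu}\leq h_{J-1}^2/p_{J-1}^{2-\mu}$ for $\mu\in\{0,1\}$; collecting the two contributions then produces the factor $(\CLtwonew{J}+\CLtwonew{J-1})h_{J-1}^2/p_{J-1}^{2-\mu}$ and yields \eqref{approxprop}.

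The hard part will be the identification step, namely proving rigorously that $v=u_J$ and $P_J^{J-1}v=u_{J-1}$ for the common data $g$. This rests on the nested structure of the spaces and the precise definitions of the transfer operators, and in particular on the compatibility of the two bilinear forms on $V_{J-1}$, so that the defining equation of $P_J^{J-1}v$ coincides with that of the coarse DG solution. A secondary point needing care is the use of \eqref{L2error} at the borderline regularity $k=2$: the auxiliary solution only lies in $H^2(\Omega)$, so one applies the $L^2$ estimate in the form delivered by the duality argument underlying Theorem~\ref{thm:errors} rather than under the nominally more restrictive condition $k>1+d/2$ stated there.
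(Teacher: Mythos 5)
Your argument is correct, but it follows a genuinely different route from the paper's. The paper proceeds by duality: it represents $\ltrivert(\textnormal{Id}_J-I_{J-1}^JP_J^{J-1})v\rtrivert_{0,J}$ as a supremum over $\phi\in L^2(\Omega)$ of the normalized pairing $\int_\Omega\phi\,(\textnormal{Id}_J-I_{J-1}^JP_J^{J-1})v\ dx$, attaches to each $\phi$ the auxiliary problem with datum $\phi$ and its DG approximations $\eta_J$, $\eta_{J-1}$, proves $\eta_{J-1}=P_J^{J-1}\eta_J$, rewrites the pairing as $\Aa[J][\eta_J-I_{J-1}^J\eta_{J-1}][v]$, and extracts the factor $\ltrivert v\rtrivert_{2,J}$ through the generalized Cauchy--Schwarz inequality $\Aa[J][v][w]\leq\ltrivert v\rtrivert_{0,J}\ltrivert w\rtrivert_{2,J}$ of \cite[Lemma~4.1]{AntSarVer15}. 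You run the primal counterpart: taking $g=A_Jv$ as the datum makes $v$ itself the fine-level DG solution and $P_J^{J-1}v$ the coarse-level one, so that $(\textnormal{Id}_J-I_{J-1}^JP_J^{J-1})v$ is literally the difference of two Galerkin approximations of the same problem, and the factor $\ltrivert v\rtrivert_{2,J}=\normL[A_Jv][2][\Om]$ enters through elliptic regularity of the auxiliary solution rather than through a Cauchy--Schwarz step. The structural core is identical in both proofs --- namely that $P_J^{J-1}$ maps the level-$J$ DG solution of a given source problem to the level-$(J-1)$ DG solution of the same problem --- but your version dispenses with the duality representation and with the generalized Cauchy--Schwarz lemma at no extra cost; the two presentations are essentially adjoint to one another. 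Your closing caveat about invoking \eqref{L2error} at the borderline regularity $k=2$ (versus the hypothesis $k>1+d/2$ of Theorem~\ref{thm:errors}) is well taken, but note that the paper's own proof incurs exactly the same issue when it applies Theorem~\ref{thm:errors} to the $H^2$-regular dual solution $\eta$, so this is not a defect specific to your approach; likewise, your use of $I_{J-1}^J$ as the natural injection on nested spaces is made implicitly by the paper as well, in the step $\Aa[J][\eta_J][I_{J-1}^Jw]=\Aa[J][\eta_J][w]$.
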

\begin{proof}
For any $v\in V_J$, we consider the following equality
\begin{align}
\ltrivert (\textnormal{Id}_J-I_{J-1}^JP_J^{J-1})v\rtrivert_{0,J} &=\normL[(\textnormal{Id}_J-I_{J-1}^JP_J^{J-1})v] \\
&= ~\sup_{0\neq
\phi \in L^2(\Omega)} 
\frac{\int_{\Omega}\phi(\textnormal{Id}_J-I_{J-1}^JP_J^{J-1})v\ dx}{\normL[\phi]}.
\label{dualratio}
\end{align}
Next, we consider the solution $\eta$ of the following problem
\begin{equation*}
\begin{aligned}
\int_{\Omega} \nabla \eta\cdot\nabla v\ dx &= \int_{\Omega}\phi v\ dx &&\forall v\in V,
\end{aligned}
\end{equation*}
for $\phi\in L^2(\Omega)$, and let $\eta_J\in V_J$ and $\eta_{J-1}\in V_{J-1}$ be the corresponding DG approximations  
in $V_J$ and $V_{J-1}$, respectively, given by
\begin{equation}
\label{etaj}
\begin{aligned}
 \Aa[J][\eta_J][v]&= \int_{\Omega}\phi v\ dx
&&\forall v\in V_J,\\
 \Aa[J-1][\eta_{J-1}][v]&= \int_{\Omega}\phi v\ dx 
&&\forall v\in V_{J-1}.
\end{aligned}
\end{equation}
By Theorem~\ref{thm:errors} and the hypotheses \eqref{hjhj1} and \eqref{pjpj1}, 
we deduce that
\begin{equation}
\begin{aligned}
\normL[\eta-\eta_J]&\lesssim \CLtwonew{J}\frac{\h{J-1}^2}{p_{J-1}^{\textcolor{black}{2-\mu}}}\normH[\eta][2][\Om],\\
\normL[\eta-\eta_{J-1}]&\lesssim \CLtwonew{J-1}\frac{\h{J-1}^2}{p_{J-1}^{\textcolor{black}{2-\mu}}}\normH[\eta][2][\Om],\label{dualerrL22}
\end{aligned}
\end{equation}
and from a standard elliptic regularity assumption, it follows that
\begin{equation}
\begin{aligned}
\normL[\eta-\eta_J]&\lesssim \CLtwonew{J}\frac{\h{J-1}^2}{p_{J-1}^{\textcolor{black}{2-\mu}}}\normL[\phi],\\
\normL[\eta-\eta_{J-1}]&\lesssim \CLtwonew{J-1}\frac{\h{J-1}^2}{p_{J-1}^{\textcolor{black}{2-\mu}}}\normL[\phi].\label{dualerrL2}
\end{aligned}
\end{equation}
Recalling the definition of $P_{J}^{J-1}$, cf. \eqref{project},  and \eqref{etaj}, for any $w\in V_{J-1}$, we get
$$
\Aa[J-1][P_J^{J-1}\eta_J][w] 
= \Aa[J][\eta_J][I^J_{J-1}w] = \Aa[J][\eta_J][w] = \int_\Omega\phi w\ dx = \Aa[J-1][\eta_{J-1}][w].
$$
Hence, 
\begin{equation}
\eta_{J-1} = P_J^{J-1}\eta_J.
\label{ejPej1}
\end{equation}
According to \cite[Lemma~4.1]{AntSarVer15}, the following generalized Cauchy-Schwarz inequality holds
\begin{equation}
\Aa[J][v][w]\leq \ltrivert v \rtrivert_{0,J}\ltrivert w\rtrivert_{2,J},
\label{Csgen}
\end{equation}
for any $v,w\in V_J$. We now employ \eqref{etaj} and the definition of $P^{J-1}_J$ in \eqref{project}, followed by \eqref{ejPej1}, the Cauchy-Schwarz inequality \eqref{Csgen} and the error estimates \eqref{dualerrL2}, to get
\begin{align}
\int_\Omega \phi(\textnormal{Id}_J-I_{J-1}^JP^{J-1}_J)v\ dx 
=& ~ \Aa[J][\eta_J][v]-\Aa[J][\eta_J][I_{J-1}^JP^{J-1}_Jv]\\
=& ~ \Aa[J][\eta_J][v]-\Aa[J-1][P^{J-1}_J\eta_J][P^{J-1}_Jv]\\
=& ~ \Aa[J][\eta_J][v]-\Aa[J-1][\eta_{J-1}][P^{J-1}_Jv]\\
=& ~ \Aa[J][\eta_J-I_{J-1}^J\eta_{J-1}][v]\\
\leq& ~\ltrivert \eta_J-\eta_{J-1} \rtrivert_{0,J}\ltrivert v \rtrivert_{2,J}\\
\leq & ~(\normL[\eta_J-\eta]+\normL[\eta_{J-1}-\eta])\ltrivert v \rtrivert_{2,J}\\
\lesssim &~(\CLtwonew{J} +\CLtwonew{J-1})\frac{\h{J-1}^2}{p_{J-1}^{\textcolor{black}{2-\mu}}}\normL[\phi]\ltrivert v \rtrivert_{2,J}.
\label{bdd:lem_approx_twolevel}
\end{align}
Substituting \eqref{bdd:lem_approx_twolevel} into \eqref{dualratio} gives the desired result.
\end{proof}

The convergence result for the two-level method, involving the error propagation operator $\mathbb{E}^{\mathsf{2lvl}}_{m_1,m_2}$ 
defined in \eqref{E2lvl}, is obtained by combining Lemma~\ref{lem:smooth} and Lemma~\ref{lem:approx}.

\begin{theorem}
\label{thm: 2lvl}
\textcolor{black}{Assume that Assumptions~\ref{hyp:grid_new},~\ref{hyp:shapereg},~\ref{hyp:shape-regular-element},~and \ref{hyp:covering} hold.} \textcolor{black}{Let $\mu$ be defined as in Lemma \ref{lem:interpDG}.} Then,
there exists a  positive constant $\Ctwolvl$ independent of the mesh size and the polynomial approximation degree, such that
\begin{equation}
\label{2lvl}
\ltrivert\mathbb{E}^{\mathsf{2lvl}}_{m_1,m_2}v\rtrivert_{1,J} \leq \Ctwolvl\Sigma_J \ltrivert v\rtrivert_{1,J},
\end{equation}
for any $v\in V_J$, with 
\begin{equation}
\label{sigmaJ}
\Sigma_J=\Ctildenew{J}{J-1}\frac{p_{J}^{2\textcolor{black}{+\mu}}}{(1+m_1)^{1/2}(1+m_2)^{1/2}},
\end{equation}
where $\Ctildenew{J}{J-1} = \Ceig{J}(\CLtwonew{J}+\CLtwonew{J-1})$. Therefore, the two-level method converges uniformly provided the number of pre- and post-smoothing steps satisfy
\begin{equation}
\label{2lvlmbound}
(1+m_1)^{1/2}(1+m_2)^{1/2}\geq \chi \Ctildenew{J}{J-1}p_{J}^{2\textcolor{black}{+\mu}},
\end{equation}
for a positive constant $\chi>\Ctwolvl$.
\end{theorem}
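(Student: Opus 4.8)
The plan is to exploit the multiplicative structure of the error propagation operator \eqref{E2lvl}, namely $\mathbb{E}_{m_1,m_2}^{\mathsf{2lvl}}=G_J^{m_2}(\textnormal{Id}_J-I_{J-1}^JP_J^{J-1})G_J^{m_1}$, by estimating its three factors in turn: the post-smoothing iteration $G_J^{m_2}$, the coarse-grid correction $\textnormal{Id}_J-I_{J-1}^JP_J^{J-1}$, and the pre-smoothing iteration $G_J^{m_1}$. The essential idea is to sandwich the approximation property (Lemma~\ref{lem:approx}) between two applications of the smoothing property (Lemma~\ref{lem:smooth}), choosing the norm indices so that the mismatch between the $\ltrivert\cdot\rtrivert_{0,J}$ control delivered by the approximation estimate and the $\ltrivert\cdot\rtrivert_{1,J}$ target is absorbed by the gain factors $(1+m_i)^{-1/2}$ coming from smoothing.

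Concretely, writing $w=(\textnormal{Id}_J-I_{J-1}^JP_J^{J-1})G_J^{m_1}v$, I would first apply the second bound of \eqref{smoothingprop} with $s=1$, $t=0$ to the post-smoothing step, giving
\[
\ltrivert G_J^{m_2}w\rtrivert_{1,J}\lesssim \Ceig{J}^{1/2}\,p_J^{2}\,\hmin{J}^{-1}(1+m_2)^{-1/2}\ltrivert w\rtrivert_{0,J}.
\]
Next I would bound $\ltrivert w\rtrivert_{0,J}$ by the approximation property \eqref{approxprop}, which yields
\[
\ltrivert w\rtrivert_{0,J}\lesssim (\CLtwonew{J}+\CLtwonew{J-1})\,\hmin{J-1}^{2}\,p_{J-1}^{-(2-\mu)}\,\ltrivert G_J^{m_1}v\rtrivert_{2,J}.
\]
Finally I would control the pre-smoothing step by applying \eqref{smoothingprop} once more, now with $s=2$, $t=1$, to obtain
\[
\ltrivert G_J^{m_1}v\rtrivert_{2,J}\lesssim \Ceig{J}^{1/2}\,p_J^{2}\,\hmin{J}^{-1}(1+m_1)^{-1/2}\ltrivert v\rtrivert_{1,J}.
\]

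Chaining these three inequalities produces the prefactor $\Ceig{J}\,p_J^{4}\,\hmin{J}^{-2}(\CLtwonew{J}+\CLtwonew{J-1})\,\hmin{J-1}^{2}\,p_{J-1}^{-(2-\mu)}(1+m_1)^{-1/2}(1+m_2)^{-1/2}$ multiplying $\ltrivert v\rtrivert_{1,J}$. At this point I would invoke the bounded variation hypotheses: \eqref{hjhj1} gives $\hmin{J-1}^{2}\hmin{J}^{-2}\approx 1$, while \eqref{pjpj1} gives $p_J^{4}p_{J-1}^{-(2-\mu)}\approx p_J^{2+\mu}$. Collecting the surviving terms and recalling $\Ctildenew{J}{J-1}=\Ceig{J}(\CLtwonew{J}+\CLtwonew{J-1})$ recovers exactly $\Sigma_J$ from \eqref{sigmaJ}, which establishes \eqref{2lvl} with a constant $\Ctwolvl$ absorbing the hidden $\lesssim$-constants. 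The convergence criterion \eqref{2lvlmbound} then follows at once: the iteration contracts in $\ltrivert\cdot\rtrivert_{1,J}$ precisely when $\Ctwolvl\Sigma_J<1$, i.e.\ when $(1+m_1)^{1/2}(1+m_2)^{1/2}>\Ctwolvl\,\Ctildenew{J}{J-1}\,p_J^{2+\mu}$, which is guaranteed whenever the smoothing steps obey the stated bound with $\chi>\Ctwolvl$.

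The main point to handle with care — more a matter of bookkeeping than a genuine obstacle — is the alignment of the norm indices, so that the two applications of the smoothing property (from index $0$ to $1$, and from index $1$ to $2$) correctly dovetail with the fixed indices of the approximation property (from index $2$ to index $0$); this is exactly what forces the combined factor $p_J^{4}\hmin{J}^{-2}$ to appear before the bounded variation hypotheses collapse it to $p_J^{2+\mu}$. I would additionally check that the approximation property, although stated for arbitrary $v\in V_J$, applies verbatim to $G_J^{m_1}v\in V_J$, and that every constant hidden in $\lesssim$ depends only on $\Ccont$, $\Ccoer$ and the shape-regularity of the coverings, so that the final $\Ctwolvl$ is genuinely independent of $\hmin{J}$ and $p_J$, as asserted.
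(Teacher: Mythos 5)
Your proposal is correct and follows exactly the route the paper itself indicates: the paper's proof is a one-line remark that the result follows by applying the smoothing property \eqref{smoothingprop} twice (with the index pairs $(s,t)=(1,0)$ and $(s,t)=(2,1)$, precisely as you chose), the approximation property \eqref{approxprop} once in between, and the bounded variation hypotheses \eqref{hjhj1} and \eqref{pjpj1} to collapse $p_J^4\,p_{J-1}^{-(2-\mu)}\,h_{J-1}^2\,h_J^{-2}$ to $p_J^{2+\mu}$. Your chaining of the three estimates and the derivation of the contraction criterion \eqref{2lvlmbound} supply exactly the bookkeeping the paper leaves implicit.
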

\begin{proof}
The statement of the theorem follows in a straightforward manner by applying the smoothing property \eqref{smoothingprop} 
twice, the approximation property \eqref{approxprop} and exploiting the bounded variation assumptions \eqref{hjhj1} and \eqref{pjpj1}.
\end{proof}
\added{We observe that the geometric properties of the partitions are hidden in the constant $\Ctildenew{J}{J-1}$. As a consequence, a good quality agglomerated coarse grid is fundamental to guarantee a mild condition on the minimun number of smoothing steps.} 
\par

\subsection{Convergence of the W-cycle multigrid algorithm} \label{sec:wcycle_analysis}

We first need to establish the equivalence between DG norms on subsequent grid levels. We point out that,
in contrast to the case of standard quasi-uniform grids presented in \cite{AntSarVer15}, such an equivalence result
does not follow in a straightforward manner; indeed, here we need to exploit Assumption~\ref{ass:agglomeration_new} 
introduced in the previous section. Under these assumptions, the proof of the following result follows immediately.
\begin{lemma}
\label{lem:DGequiv}
Assuming Assumption~\ref{ass:agglomeration_new} holds, then for any $v\in V_{j-1}$, $j = 2,\dots,J$, we have that
\begin{equation}
\label{DGequiv}
\normDG[v][j]\leq \Cequiv \normDG[v][j-1],
\end{equation}
where $\Cequiv = \Cequiv(\Theta)$, in general, depends on the quality of the agglomerated grids.
\end{lemma}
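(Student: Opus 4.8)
The plan is to compare the two DG norms term by term. Recall that for $v \in V_{j-1} \subseteq V_j$ (nestedness holds by the setup), we have
\begin{equation*}
\normDG[v][j][2] = \sum_{\elem\in\mcal[T][j]}\int_\elem |\nabla v|^2\ dx + \sum_{F\in\mcal[F][j]} \int_F \sigma_j|\jump{v}|^2\ ds,
\end{equation*}
and likewise for $\normDG[v][j-1][2]$ with the coarse mesh $\mcal[T][j-1]$, its faces $\mcal[F][j-1]$, and the coarse penalty $\sigma_{j-1}$. First I would dispose of the volume (broken gradient) terms. Since $\mcal[T][j]$ is a refinement of $\mcal[T][j-1]$ (agglomeration being the reverse operation), each coarse element $\elem_{j-1}$ is a union of fine elements, and on the interior of $\elem_{j-1}$ the function $v$ is a single polynomial, so $\sum_{\elem\in\mcal[T][j]}\int_\elem|\nabla v|^2 = \sum_{\elem\in\mcal[T][j-1]}\int_\elem|\nabla v|^2$. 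Hence the volume contributions coincide exactly and require no constant.

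Next I would handle the face terms, which is where the real work lies. The fine faces split into two groups: those inherited from the coarse mesh, $\mcal[F][j]\cap\mcal[F][j-1]$, and the genuinely new interior faces created inside each coarse element by the refinement. On the new interior faces $F$, both traces $v^+$ and $v^-$ come from the \emph{same} coarse polynomial (since $F$ lies in the interior of some $\elem_{j-1}$), so $\jump{v}|_F = 0$ and these faces contribute nothing. Thus only the shared faces survive, and on each such $F$ it remains to compare $\sigma_j$ against $\sigma_{j-1}$. This is precisely where Assumption~\ref{ass:agglomeration_new} enters: from the definition \eqref{eq:stabilization_function}, on a shared face the ratio $\sigma_j/\sigma_{j-1}$ is controlled by $(C_\sigma^j/C_\sigma^{j-1})(p_j^2/p_{j-1}^2)(h_{\elem_{j-1}^\pm}/h_{\elem_j^\pm})$, and Assumption~\ref{ass:agglomeration_new} bounds the mesh-size ratio by $\Theta$, while \eqref{pjpj1} controls the polynomial-degree ratio. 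Putting these together yields $\sigma_j \lesssim \Theta\,\sigma_{j-1}$ on each shared face, with the hidden constant absorbing the $C_\sigma$ and $p$ factors.

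Combining the two parts, I would write
\begin{equation*}
\normDG[v][j][2] = \sum_{\elem\in\mcal[T][j-1]}\int_\elem|\nabla v|^2\ dx + \sum_{F\in\mcal[F][j]\cap\mcal[F][j-1]}\int_F\sigma_j|\jump{v}|^2\ ds \leq \Cequiv^2\,\normDG[v][j-1][2],
\end{equation*}
where $\Cequiv = \Cequiv(\Theta)$ collects the face-term constant (the volume term contributes a factor of one). Taking square roots gives \eqref{DGequiv}. The main obstacle, as the text already signals, is the face comparison: one must verify carefully that the jump of a coarse function vanishes on the newly introduced fine faces—so that no uncontrolled new penalty terms appear—and that on the shared faces the penalty ratio is uniformly bounded, which is exactly the role played by Assumption~\ref{ass:agglomeration_new}. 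Everything else is either an equality or a direct consequence of nestedness.
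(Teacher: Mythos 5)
Your proof is correct and is precisely the argument the paper leaves implicit when it states that the result ``follows immediately'' from Assumption~\ref{ass:agglomeration_new}: the broken gradient terms coincide since $v\in V_{j-1}$ is a single polynomial on each agglomerate, the jumps vanish on the fine faces interior to a coarse element, and on the shared faces $F\in\mcal[F][j]\cap\mcal[F][j-1]$ the ratio $\sigma_j/\sigma_{j-1}$ is bounded via the mesh-size ratio $\Theta$ together with \eqref{pjpj1}. You have simply filled in the details the paper omits, so there is nothing to add.
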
\par

Lemma~\ref{lem:DGequiv} is essential to deduce the  stability of the operators $I_{j-1}^j$ and $P_{j}^{j-1}$,
$j = 2,\dots,J$. In particular, we state the following bounds.
\begin{lemma}
\label{lem: RPstab}
\textcolor{black}{Assuming Assumption~\ref{ass:agglomeration_new} holds}, then there exists a constant \textcolor{black}{$\Cstab \geq 1$}, independent of the mesh size, the polynomial 
approximation degree and the level $j$, $j = 2,\dots,J$, such that
\begin{align}
\ltrivert I_{j-1}^j v\rtrivert_{1,j} &\leq \Cstab\ltrivert  v\rtrivert_{1,j-1}
&& \forall v\in V_{j-1},\label{Rstab}\\
\ltrivert P^{j-1}_j v\rtrivert_{1,j-1} &\leq \Cstab\ltrivert  v\rtrivert_{1,j}&&
\forall v\in V_{j}.\label{Pstab}
\end{align}
\end{lemma}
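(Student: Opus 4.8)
The plan is to prove the two stability bounds \eqref{Rstab} and \eqref{Pstab} by reducing everything to the DG-norm equivalence of Lemma~\ref{lem:DGequiv}, together with the coercivity and continuity estimates of Lemma~\ref{lem:contcoerc} and the fact that the discrete norm $\norms[\cdot][1][j]$ coincides with $\Aa[j][\cdot][\cdot]^{1/2}$, cf. \eqref{norm1Aj}. The key observation is that since the spaces are nested ($V_{j-1}\subseteq V_j$), the prolongation $I_{j-1}^j$ is simply the natural injection, so $I_{j-1}^j v = v$ as an element of $V_j$ whenever $v\in V_{j-1}$. Thus the inequality to prove in \eqref{Rstab} is really a comparison of the same function measured in two different level-dependent bilinear forms.

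For the prolongation bound \eqref{Rstab}, I would start from the identity $\norms[I_{j-1}^j v][1][j][2]=\Aa[j][v][v]$ for $v\in V_{j-1}$. By coercivity on level $j-1$ and continuity on level $j$ (Lemma~\ref{lem:contcoerc}), this is controlled as
\begin{equation}
\Aa[j][v][v]\leq \Ccont\normDG[v][j][2]\leq \Ccont\Cequiv^2\normDG[v][j-1][2]\leq \frac{\Ccont\Cequiv^2}{\Ccoer}\Aa[j-1][v][v],
\end{equation}
where the middle step is exactly Lemma~\ref{lem:DGequiv}. Recalling \eqref{norm1Aj} again, the right-hand side is $(\Ccont\Cequiv^2/\Ccoer)\norms[v][1][j-1][2]$, so \eqref{Rstab} follows with $\Cstab=(\Ccont\Cequiv^2/\Ccoer)^{1/2}$, and this constant is manifestly independent of $h$, $p$, and $j$ as required (its only geometric dependence is through $\Cequiv=\Cequiv(\Theta)$).

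For the projection bound \eqref{Pstab}, the natural route is to test the defining relation \eqref{project} of $P_j^{j-1}$ against $w=P_j^{j-1}v$ itself. This gives
\begin{equation}
\norms[P_j^{j-1}v][1][j-1][2]=\Aa[j-1][P_j^{j-1}v][P_j^{j-1}v]=\Aa[j][v][I_{j-1}^j P_j^{j-1}v].
\end{equation}
Applying continuity of $\Aa[j]$ on the right, followed by the already-established prolongation stability \eqref{Rstab}, I would obtain
\begin{equation}
\norms[P_j^{j-1}v][1][j-1][2]\lesssim \normDG[v][j]\,\normDG[I_{j-1}^j P_j^{j-1}v][j]\lesssim \norms[v][1][j]\,\Cstab\norms[P_j^{j-1}v][1][j-1],
\end{equation}
and dividing through by $\norms[P_j^{j-1}v][1][j-1]$ yields \eqref{Pstab} with a possibly enlarged (but still parameter-independent) constant. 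The main obstacle I anticipate is purely bookkeeping: one must make sure that the constant in \eqref{Pstab} can be taken equal to the same $\Cstab\geq1$ as in \eqref{Rstab}, which is arranged simply by defining $\Cstab$ to be the maximum of the two constants produced above. No genuinely new geometric input is needed beyond Lemma~\ref{lem:DGequiv}, since the nestedness of the spaces makes $I_{j-1}^j$ trivial and the duality definition of $P_j^{j-1}$ transfers the burden entirely onto the prolongation estimate.
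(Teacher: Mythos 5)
Your proposal is correct and follows essentially the same route as the paper, which proves this lemma by combining the norm equivalence \eqref{DGequiv} with continuity and coercivity exactly as in \cite[Lemma~4.6]{AntSarVer15}: the injection argument for \eqref{Rstab} and the adjoint/duality argument for \eqref{Pstab} are the standard ingredients. The only cosmetic remark is that in the second chain you could apply the Cauchy--Schwarz inequality directly in the energy inner product $\Aa[j][\cdot][\cdot]$ rather than passing through the DG norms, which yields \eqref{Pstab} with the same constant $\Cstab$ as \eqref{Rstab} without any extra bookkeeping.
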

The proof of Lemma~\ref{lem: RPstab} is based on employing inequality \eqref{DGequiv}; for details, see \cite[Lemma~4.6]{AntSarVer15}.
\begin{remark}
We stress that the constant $\Cstab$ depends on $\Cequiv$ in \eqref{DGequiv}, which means that the quality of the agglomerated meshes plays a crucial role in keeping this constant bounded, thus resulting in the uniformity with respect to the mesh size and the number of levels as shown in Theorem~\ref{thm:final} below.
\end{remark}\par
The error propagation operator associated to Algorithm~\ref{alg:multilevel} is defined as 
\begin{equation}
\left\{
\begin{aligned} 
\mathbb{E}_{1,m_1,m_2}v &= 0\\
\mathbb{E}_{j,m_1,m_2}v &= G_j^{m_2}(\textnormal{Id}_j-I_{j-1}^j(\textnormal{Id}_j-\mathbb{E}_{j-1,m_1,m_2}^2)P_j^{j-1})G_j^{m_1}v,
\ j = 2,\dots,J,
\end{aligned}
\label{defEj}
\right.
\end{equation}
where $G_j = \textnormal{Id}_j-B_j^{-1}A_j$ and $P_j^{j-1}$ is defined analogously to \eqref{project}, cf. \cite{Hack,bramble}.
Then the convergence estimate for the W-cycle  multigrid scheme follows from Theorem~\ref{thm: 2lvl} and the stability estimates \eqref{Rstab} and \eqref{Pstab}.
\begin{theorem}
\label{thm:final}
\textcolor{black}{Assume that Assumptions~\ref{hyp:grid_new},~\ref{hyp:shapereg},~\ref{hyp:shape-regular-element},~\ref{hyp:covering}, and~\ref{ass:agglomeration_new} hold.} \textcolor{black}{Let $\mu$ be defined as in Lemma \ref{lem:interpDG}.}
Let $\Ctwolvl$ and $\Cstab$ be defined as in Theorem~\ref{thm: 2lvl} and Lemma~\ref{lem: RPstab},
respectively, \added{and let $\Ctildenew{j}{j-1}$ be defined as in Theorem~\ref{thm: 2lvl}, but on the level $j$, i.e., $\Ctildenew{j}{j-1}=\Ceig{j}(\CLtwonew{j}+\CLtwonew{j-1})$, $j=2,\ldots, J$.} Then, there exists a constant $\Chat>\Ctwolvl$, independent of the mesh size, the polynomial approximation degree and the level $j$, $j = 1,\dots,J$, such that, if the number of pre- and post-smoothing steps satisfy
\begin{equation}
\label{mhat}
(m_1+1)^{1/2}(m_2+1)^{1/2}\geq\left\{
\begin{aligned}
&p_{j}^{2\textcolor{black}{+\mu}}\Ctildenew{j}{j-1}\frac{(\Cstab)^2\Chat^2}{\Chat-\Ctwolvl}\quad && \textnormal{if }
\Ctildenew{j-1}{j-2}\leq \Ctildenew{j}{j-1},\\
&p_{j}^{2\textcolor{black}{+\mu}}\frac{(\Ctildenew{j-1}{j-2})^2}{\Ctildenew{j}{j-1}}\frac{(\Cstab)^2\Chat^2}{\Chat-\Ctwolvl}\quad &&\textnormal{otherwise,}
\end{aligned}\right.
\end{equation}
then
\begin{equation}
\label{final}
\ltrivert \mathbb{E}_{j,m_1,m_2}v\rtrivert_{1,j}\leq\Chat\Sigma_j\ltrivert v\rtrivert_{1,j}\quad \forall v\in V_j,
\end{equation}
with
\begin{equation}
\label{sigmaj}
\Sigma_j=\Ctildenew{j}{j-1}\frac{p_{j}^{2\textcolor{black}{+\mu}}}{(1+m_1)^{1/2}(1+m_2)^{1/2}}.
\end{equation} 
\end{theorem}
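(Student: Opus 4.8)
The plan is to argue by induction on the level $j$, exploiting the recursive definition \eqref{defEj} of the error propagation operator. The base case $j=1$ is immediate, since $\mathbb{E}_{1,m_1,m_2}v=0$ and $\Chat,\Sigma_1>0$. For the inductive step, I would first rewrite the level-$j$ operator by expanding the inner factor $\textnormal{Id}_j-\mathbb{E}_{j-1,m_1,m_2}^2$ in \eqref{defEj}, which yields the additive splitting
\[
\mathbb{E}_{j,m_1,m_2} = G_j^{m_2}\bigl(\textnormal{Id}_j - I_{j-1}^j P_j^{j-1}\bigr)G_j^{m_1} + G_j^{m_2}\,I_{j-1}^j\,\mathbb{E}_{j-1,m_1,m_2}^2\,P_j^{j-1}\,G_j^{m_1}.
\]
The first summand is precisely the two-level error propagation operator \eqref{E2lvl} read on level $j$ (replacing $J,J-1$ by $j,j-1$); since Assumptions~\ref{hyp:grid_new}--\ref{hyp:covering} hold on every level, Theorem~\ref{thm: 2lvl} applies verbatim and controls it by $\Ctwolvl\Sigma_j\ltrivert v\rtrivert_{1,j}$.

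The core of the argument is the estimate of the second (coarse-correction) summand in the $\ltrivert\cdot\rtrivert_{1,j}$ norm. I would chain, in order, the non-expansiveness of $G_j^{m_2}$ (the first inequality in Lemma~\ref{lem:smooth}), the stability \eqref{Rstab} of the prolongation $I_{j-1}^j$, the inductive hypothesis applied \emph{twice} to absorb the square $\mathbb{E}_{j-1,m_1,m_2}^2$, the stability \eqref{Pstab} of $P_j^{j-1}$, and finally the non-expansiveness of $G_j^{m_1}$. This telescoping bounds the correction term by $(\Cstab)^2\Chat^2\Sigma_{j-1}^2\ltrivert v\rtrivert_{1,j}$.

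Summing the two contributions gives
\[
\ltrivert\mathbb{E}_{j,m_1,m_2}v\rtrivert_{1,j}\leq\bigl(\Ctwolvl\Sigma_j + (\Cstab)^2\Chat^2\Sigma_{j-1}^2\bigr)\ltrivert v\rtrivert_{1,j},
\]
so the induction closes as soon as $(\Cstab)^2\Chat^2\Sigma_{j-1}^2\leq(\Chat-\Ctwolvl)\Sigma_j$. Inserting the definition \eqref{sigmaj} of $\Sigma_j$, isolating the surviving factor $(1+m_1)^{1/2}(1+m_2)^{1/2}$, and using the bounded-variation hypothesis \eqref{pjpj1} to compare $p_{j-1}$ and $p_j$, this inequality reduces exactly to the two-regime condition \eqref{mhat}, the dichotomy there reflecting whether or not $\Ctildenew{j-1}{j-2}\leq\Ctildenew{j}{j-1}$.

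I expect the main obstacle to be the verification that a single $\Chat>\Ctwolvl$ can be fixed \emph{before} the induction and used uniformly on all levels. This is possible precisely because $\Chat$ enters the correction term \emph{quadratically}: the resulting inequality $(\Cstab)^2\Chat^2\Sigma_{j-1}^2\leq(\Chat-\Ctwolvl)\Sigma_j$ can be met by taking the smoothing parameters large, and it is exactly this quadratic mechanism (absent in the V-cycle) that delivers level-independent convergence without any elliptic-regularity-type assumption on the coarse correction. The secondary technical point is tracking the level-dependent constants $\Ctildenew{j}{j-1}=\Ceig{j}(\CLtwonew{j}+\CLtwonew{j-1})$ through the recursion, ensuring that their possible growth from one level to the next is absorbed by the second branch of \eqref{mhat}.
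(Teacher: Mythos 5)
Your proposal is correct and follows essentially the same route as the paper's proof: induction on $j$ with the additive splitting into the two-level operator plus the coarse-correction term, Theorem~\ref{thm: 2lvl} for the former, the smoothing/stability bounds and the induction hypothesis applied twice for the latter, and the closing inequality $(\Cstab)^2\Chat^2\Sigma_{j-1}^2\leq(\Chat-\Ctwolvl)\Sigma_j$ resolved via $\Sigma_{j-1}\leq\Sigma_j\,\Ctildenew{j-1}{j-2}/\Ctildenew{j}{j-1}$ (using $p_{j-1}\leq p_j$) into the two-regime condition \eqref{mhat}.
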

\begin{proof}
The proof follows the derivation of the analogous result presented in \cite[Theorem~4.7]{AntSarVer15}. 
For $j = 1$, the statement of the theorem trivially holds. For $j>1$, by an induction hypothesis, 
we assume that \eqref{final} holds for $j-1$. By the definition of the error propagation operator 
$\mathbb{E}_{j,m_1,m_2}v$ in \eqref{defEj}, it follows that
\begin{align*}
\ltrivert \mathbb{E}_{j,m_1,m_2}v\rtrivert_{1,j}
\leq& ~\ltrivert G_j^{m_2}(\textnormal{Id}_j-I_{j-1}^jP_j^{j-1})G_j^{m_1}v\rtrivert_{1,j}\\
&+\ltrivert G_j^{m_2}I_{j-1}^j\mathbb{E}_{j-1,m_1,m_2}^2P_j^{j-1}G_j^{m_1}v\rtrivert_{1,j}.
\end{align*}
The first term corresponds to a two-level method between level $j$ and $j-1$. We now observe that the smoothing property of Lemma~\ref{lem:smooth} and the approximation property of Lemma~\ref{lem:approx} can be  extended to any level $V_j$, $j=2,\dots,J$, and we therefore have, by Theorem~\ref{thm: 2lvl}, that
$$\ltrivert G_j^{m_2}(\textnormal{Id}_j-I_{j-1}^jP_j^{j-1})G_j^{m_1}v\rtrivert_{1,j} \leq \Ctwolvl\Sigma_j \ltrivert v\rtrivert_{1,j}.$$
The bound on the second term is obtained by applying the smoothing property \eqref{smoothingprop} for $j=2,\ldots,J$, the stability estimates \eqref{Rstab} and \eqref{Pstab} and the induction hypothesis; thereby, we get
\begin{align*}
\ltrivert G_j^{m_2}I_{j-1}^j\mathbb{E}_{j-1,m_1,m_2}^2P_j^{j-1}G_j^{m_1}v\rtrivert_{1,j}\leq
&(\Cstab)^2\Chat^2\Sigma_{j-1}^2\ltrivert v\rtrivert_{1,j}.
\end{align*}
We then obtain
\begin{equation*}
\ltrivert \mathbb{E}_{j,m_1,m_2}v\rtrivert_{1,j}\leq\left(\Ctwolvl\Sigma_j+(\Cstab)^2\Chat^2\Sigma_{j-1}^2\right)\ltrivert v\rtrivert_{1,j}.
\end{equation*}
\textcolor{black}{
We now observe that the following relation between $\Sigma_{j-1}$ and $\Sigma_{j}$ holds 
\begin{align*}
\Sigma_{j-1}=\Sigma_{j}\left( \frac{p_{j-1}}{p_j} \right) \left( \frac{\Ctildenew{j-1}{j-2}}{\Ctildenew{j}{j-1}} \right) 
\leq  \Sigma_{j}\left( \frac{\Ctildenew{j-1}{j-2}}{\Ctildenew{j}{j-1}} \right).
\end{align*}
Using the above identity we have that
\begin{align*}
&\Ctwolvl\Sigma_j+(\Cstab)^2\Chat^2\Sigma_{j-1}^2  \\
&\qquad \qquad \leq \left(\Ctwolvl+(\Cstab)^2\Chat^2  \frac{(\Ctildenew{j-1}{j-2})^2}{\Ctildenew{j}{j-1}} 
\frac{p_{j}^{2\textcolor{black}{+\mu}}}{(1+m_1)^{1/2}(1+m_2)^{1/2}}\right)\Sigma_j.
\end{align*}
We then observe that if $m_1$ and $m_2$ are such that
\begin{equation*}
(1+m_1)^{1/2}(1+m_2)^{1/2} \geq p_{j}^{2\textcolor{black}{+\mu}}\frac{(\Ctildenew{j-1}{j-2})^2}{\Ctildenew{j}{j-1}}\frac{(\Cstab)^2\Chat^2}{\Chat-\Ctwolvl},
\end{equation*}
it follows that $\Ctwolvl\Sigma_j+(\Cstab)^2\Chat^2\Sigma_{j-1}^2\leq \Chat\Sigma_j$.
Notice that for $\Ctildenew{j-1}{j-2}\leq \Ctildenew{j}{j-1}$ the above condition on $m_1$ and $m_2$ can be simplified as follows
\begin{equation*}
\label{mbound}
(1+m_1)^{1/2}(1+m_2)^{1/2}\geq p_{j}^{2\textcolor{black}{+\mu}}\Ctildenew{j}{j-1}\frac{(\Cstab)^2\Chat^2}{\Chat-\Ctwolvl},
\end{equation*}
and therefore we obtain $\Ctwolvl\Sigma_j+(\Cstab)^2\Chat^2\Sigma_{j-1}^2\leq \Chat\Sigma_j$, and the proof is complete.
}
\end{proof}

\textcolor{black}{
As in the two-level case, inequality \eqref{final} implies that  the convergence of the method is guaranteed if the 
number of smoothing steps is chosen sufficiently large, cf. \eqref{mhat}. Moreover, compared to the case of standard 
quasi-uniform grids, cf. \cite{AntSarVer15}, the bound \eqref{mhat} on the number of smoothing steps involves a 
dependence on the geometric properties of the underlying agglomerated meshes, which in principle, could lead to  
shape-regularity conditions on the hierarchy of grids employed. 
However, we remark that, in practice, the numerical simulations indicate that the proposed multigrid algorithms 
converge uniformly, even when low quality agglomerated grids are employed; moreover, an increase in the polynomial 
order does not seem to require a higher number of smoothing steps to obtain a convergent iteration, cf. 
Section~\ref{sec:numerical} for details.
}
\begin{remark}
\label{rem:theta}
Whenever the agglomerated grids are not quasi-uniform, Theorem~\ref{thm: 2lvl} and Theorem~\ref{thm:final} still hold. More precisely, we need to introduce the ratio $\theta_j$ between the maximum and minimum element size on level $j$, i.e., 
\begin{equation}
\theta_j = \frac{\max_{\elem\in\mcal[T][j]}\h{\elem}}{\min_{\elem\in\mcal[T][j]}\h{\elem}},\quad j=1,\dots,J.
\end{equation}
Assuming there exists a constant $\Cmesh$, independent of the granularity of the mesh, such that
\begin{equation}
\label{Cmesh}
\theta_j\leq \Cmesh, \quad j=1,\dots,J,
\end{equation}
then the results in Theorem~\ref{thm: 2lvl} and Theorem~\ref{thm:final} hold with
\begin{equation*}
\Sigma_j=\theta_j^2\Ctildenew{j}{j-1}\frac{p_{j}^{2\textcolor{black}{+\mu}}}{(1+m_1)^{1/2}(1+m_2)^{1/2}},
\end{equation*}
cf. \eqref{sigmaj}. \paul{Moreover,} the bound \eqref{mhat} is modified as follows
\begin{equation}
\label{mhattheta}
(1+m_1)^{1/2}(1+m_2)^{1/2}\geq\left\{
\begin{aligned}
&\frac{(\Cstab)^2\Chat^2}{\Chat-\Ctwolvl}\frac{(\Cmesh)^4}{\theta_j^2}\Ctildenew{j}{j-1}p_{j}^{2\textcolor{black}{+\mu}}\quad && \textnormal{if }\Ctildenew{j-1}{j-2}\leq \Ctildenew{j}{j-1},\\
&\frac{(\Cstab)^2\Chat^2}{\Chat-\Ctwolvl} \frac{(\Cmesh)^4}{\theta_j^2}\frac{(\Ctildenew{j-1}{j-2})^{2}}{\Ctildenew{j}{j-1}}p_{j}^{2\textcolor{black}{+\mu}}\quad &&\textnormal{otherwise.}
\end{aligned}\right.
\end{equation}
\end{remark}

\begin{remark}
\textcolor{black}{
We recall that in Theorem~\ref{thm:final} and Remark~\ref{rem:theta}, in order to guarantee the convergence of the method, we require a lower bound on the number of smoothing steps, cf. \eqref{mhat} and \eqref{mhattheta}.
Such a condition guarantees that the resulting W-cycle method is uniformly convergent with 
respect to the mesh size, the polynomial approximation degree, and the number of levels. 
In fact, for $\Ctildenew{j-1}{j-2}\leq \Ctildenew{j}{j-1}$, from \eqref{mhattheta} and 
using that $\theta_j\leq \Cmesh$, $j=1,\dots,J$, we obtain
\begin{equation*}
\Chat\Sigma_j=\Chat\theta_j^2\Ctildenew{j}{j-1}\frac{p_{j}^{2\textcolor{black}{+\mu}}}{(1+m_1)^{1/2}(1+m_2)^{1/2}}
\leq \frac{\Chat-\Ctwolvl}{(\Cstab)^2\Chat}\frac{\theta_j^4}{(\Cmesh)^4}
\leq \frac{\Chat-\Ctwolvl}{(\Cstab)^2\Chat}< 1.
\end{equation*}
An analogous result can be obtained for $\Ctildenew{j-1}{j-2}> \Ctildenew{j}{j-1}$. Moreover, we note that we have considered the general setting of \eqref{mhattheta}, since \eqref{mhat} can be regarded as a particular case. 
}
\end{remark}

\textcolor{black}{
\section{Weaker geometric assumptions on the quality of the agglomerates}
\label{rem:old_assumption_rev}}
\textcolor{black}{
\paul{In this section we briefly provide some details regarding the minimal regularity 
requirements needed to guarantee that our geometric $h-$multigrid method is convergent. 
Indeed, the theoretical analysis of our two-level and W-cycle multigrid algorithms solver can 
be undertaken under weaker mesh assumptions on the shape of the elements and the quality 
of the agglomerated grids than those satisfying Assumptions~\ref{hyp:grid_new} 
and~\ref{hyp:shape-regular-element}. Before we proceed, let us first introduce the
following two definitions.
\begin{definition}\label{def:covering}
An element $\kappa\in\mcal[T][j]$, $j=1,\dots,J$, is said {\em $p_j$-coverable} with 
respect to $p_j\in\mathbb{N}$, if there exists a set of $l_{\kappa}$ shape-regular 
simplices $K_i$, $i=1,\dots, l_{\kappa}$, $l_{\kappa}\in\mathbb{N}$,
such that
\begin{equation}
\mbox{dist}(\kappa, \partial K_i) <  C_{as}\frac{\mbox{diam}(K_i)}{p_j^{2}},
\qquad
\mbox{and} 
\qquad
|K_i|\ge c_{as} |\kappa| \nonumber
\end{equation}
for all $i=1,\dots, l_{\kappa}$, where $C_{as}$ and $c_{as}$ are
positive constants, independent of $\kappa$ and $\mcal[T][j]$.
\end{definition}
\begin{definition}
\label{def:kb}
For each $\elem\in\mcal[T][j]$, $j=1,\dots,J$, we denote by \mcal[F][\elem][\flat] the set of all possible $d$-simplices contained in \elem and having at least one face in common with \elem. Moreover, we denote by $\elem_F^\flat$, an element in \mcal[F][\elem][\flat] sharing a face $F$ with $\elem\in\mcal[T][j]$.
\end{definition}
We point out that, assuming each mesh $\mcal[T][j]$, $j=1, \ldots, J$, is shape-regular,
then Lemma~\ref{lem:inverse} can be shown to hold, without the need to assume
that Assumption~\ref{hyp:shape-regular-element} is satisfied for elements
which are $p_j$-coverable; see \cite{CangianiDongGeorgoulisHouston_2016} for details.
Secondly, as an alternative to Assumption~\ref{hyp:grid_new}, we 
may consider the following condition.}
\begin{assumption}{(Weaker mesh regularity assumptions)}\label{ass:weak}
For any $j=1, \ldots, J$, the mesh $\mcal[T][j]$ satisfies the following regularity properties
\begin{enumerate}[label={\bf \ref{ass:weak}.\alph*}, leftmargin=2\parindent]
\item \label{ass:weak_CF} The number of faces of any $\elem\in\mcal[T][j]$, $j=1,\dots,J$, is uniformly bounded;
\item \label{ass:agglomeration} For any $F\in\mcal[F][j]\cap\mcal[F][j-1]$, $j=2,\ldots,J$, we denote by $\elem^\pm_{j}$ and $\elem^\pm_{j-1}$ the neighboring elements sharing the face $F$ in \mcal[T][j] and \mcal[T][j-1], respectively. We assume that there exists $\Theta>0$ such that
\begin{equation*}
\begin{aligned}
& 1<\frac{|\elem_{j-1}^\pm|}{|\elem_j^\pm|}\leq \Theta\quad\forall F\in\mcal[F][j]\cap\mcal[F][j-1]
&& \textrm{and}
&& \frac{|\elem_j^\pm|}{\sup_{\elem_F^\flat\in\elem_j^\pm}|\elem_F^\flat|}\approx \frac{|\elem_{j-1}^\pm|}{\sup_{\elem_F^\flat\in\elem_{j-1}^\pm}|\elem_F^\flat|}.
\end{aligned}
\end{equation*} 
\end{enumerate}
\end{assumption}
Assumption~\ref{ass:weak_CF} might in principle be critical in our multilevel framework, because the number of faces grows with the number of levels due to the agglomeration process. As a consequence, this assumption is only satisfied if the number of levels is 
kept limited. However, it will be demonstrated in Section~\ref{sec:numerical}, that this assumption \paul{only} seems to be required \paul{from a theoretical point of view.} \\
}
\textcolor{black}{
A key step in the weakening of the mesh conditions is establishing
an inverse inequality of the form outlined in 
Lemma~\ref{lem:inversepoly}, which holds for general polygonal/polyhedral elements.  
Indeed, assuming Assumption~\ref{ass:weak_CF} is satisifed, 
then following inverse inequality 
holds, cf. \cite[Lemma~4.4]{Canetal14}.
\begin{lemma} \label{lem:weak_inverse}
Let $\elem\in\mcal[T][j]$, $j=1,\dots,J$, be a polygonal/polyhedral element, and let $F\subset\partial\elem$ be one of its faces. Then, for each $v\in\mcal[P][p_j](\elem)$, 
we have
\begin{equation}
\label{inversepoly_old}
\normL[v][2][F][2]\leq 
\CINV{j}{p_j}{\elem}{F}
\frac{p_j^2|F|}{|\elem|}\normL[v][2][\elem][2],
\end{equation}
with
\begin{equation}
\CINV{j}{p_j}{\elem}{F}=C\left\{\begin{aligned}
\displaystyle \min&\left\{\frac{|\elem|}{\sup_{\elem_F^\flat\subset\elem}|\elem_F^\flat|},p_j^{2d}\right\},\quad &&\textnormal{if }\elem \textnormal{ is $p_j$-coverable},\\
\displaystyle&\phantom{\left\{\right\}}\frac{|\elem|}{\sup_{\elem_F^\flat\subset\elem}|\elem_F^\flat|}, &&\textnormal{ otherwise},
\end{aligned}\right.
\end{equation}
and $\elem_F^\flat\in \mcal[F][\elem][\flat]$ as in Definition~\ref{def:kb}. The positive constant $C$ is independent of $|\elem|/\sup_{\elem_F^\flat\in\elem}|\elem_F^\flat|$, $p_j$ and $v$.
\end{lemma}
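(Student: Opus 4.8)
The plan is to reduce the estimate on the polygonal/polyhedral element $\elem$ to the classical $hp$--trace inequality on a $d$--simplex, namely $\normL[w][2][\partial T][2]\lesssim p_j^2(\abs{\partial T}/\abs{T})\normL[w][2][T][2]$ for every $w\in\mcal[P][p_j](T)$ and every shape-regular simplex $T$, cf. \cite{Sch98,Geo08}. Following the structure of the constant $\CINV{j}{p_j}{\elem}{F}$, I would treat separately a bound that holds for \emph{any} element, producing the factor $\abs{\elem}/\sup_{\elem_F^\flat\subset\elem}\abs{\elem_F^\flat}$, and a complementary bound $p_j^{2d}$ that is available only when $\elem$ is $p_j$-coverable; the minimum of the two then yields the coverable case, while the first bound alone yields the ``otherwise'' case.

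First I would establish the universal bound. Fix a face $F\subset\partial\elem$ and pick a $d$--simplex $\elem_F^\flat\in\mcal[F][\elem][\flat]$ contained in $\elem$ and having $F$ as one of its faces, cf. Definition~\ref{def:kb}. Since $F$ is a face of the simplex $\elem_F^\flat$, the classical trace inequality gives $\normL[v][2][F][2]\lesssim p_j^2(\abs{F}/\abs{\elem_F^\flat})\normL[v][2][\elem_F^\flat][2]$, and enlarging the integration domain from $\elem_F^\flat\subset\elem$ to $\elem$ yields $\normL[v][2][F][2]\lesssim p_j^2(\abs{F}/\abs{\elem_F^\flat})\normL[v][2][\elem][2]$. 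Choosing the sub-simplex of largest volume, i.e. replacing $\abs{\elem_F^\flat}$ by $\sup_{\elem_F^\flat\subset\elem}\abs{\elem_F^\flat}$, and factoring $\abs{F}/\sup\abs{\elem_F^\flat}=(\abs{\elem}/\sup\abs{\elem_F^\flat})(\abs{F}/\abs{\elem})$, gives precisely \eqref{inversepoly_old} with constant $C\,\abs{\elem}/\sup_{\elem_F^\flat\subset\elem}\abs{\elem_F^\flat}$. This step uses no coverability hypothesis, so it settles the ``otherwise'' case and supplies the first entry of the minimum in the coverable case.

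It then remains, for a $p_j$-coverable element, to prove the complementary bound $\CINV{j}{p_j}{\elem}{F}\lesssim p_j^{2d}$. Let $\{K_i\}$ be the shape-regular covering simplices of Definition~\ref{def:covering}, for which $\abs{K_i}\gtrsim\abs{\elem}$ and $\elem$ reaches within distance $C_{as}\,\mbox{diam}(K_i)/p_j^2$ of $\partial K_i$. The idea is to transfer the estimate onto the shape-regular $K_i$, where the classical trace inequality is available with constants depending only on the shape-regularity of $K_i$: this produces a factor $p_j^2\abs{F}/\abs{K_i}\lesssim p_j^2\abs{F}/\abs{\elem}$, after which one must control $\normL[v][2][K_i][2]$ by $\normL[v][2][\elem][2]$. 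Here the coverability is calibrated precisely: the portion of $K_i$ not occupied by $\elem$ sits in a boundary layer of width $\lesssim\mbox{diam}(K_i)/p_j^2$, and a one-dimensional Markov-type polynomial inverse estimate bounds the $L^2$-mass of $v$ on a layer of relative width $\delta/\mbox{diam}(K_i)$ by $\lesssim p_j^2\,(\delta/\mbox{diam}(K_i))$ times its mass on $K_i$; applying this normal-direction control and tracking its accumulation through the $d$ directions of the simplex is what converts the $1/p_j^2$ thickness into the power $p_j^{2d}$. Combining with the trace inequality on $K_i$ and $\abs{K_i}\gtrsim\abs{\elem}$ then gives the second entry of the minimum.

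The classical simplicial trace inequality enters only as a black box and is entirely routine. The main obstacle is the coverable case: one has to make precise the geometric relationship between $\elem$ and the covering simplices $K_i$ and to justify the boundary-layer polynomial inverse estimate with sharp $p_j$-dependence, so that the $1/p_j^2$ thickness in Definition~\ref{def:covering} is consumed exactly by the $p_j^2$ growth of the polynomial inverse bound and leaves the clean factor $p_j^{2d}$. Equally delicate is the bookkeeping needed to verify that the hidden constant $C$ is independent of $\abs{\elem}/\sup_{\elem_F^\flat\subset\elem}\abs{\elem_F^\flat}$, of $p_j$, of $v$, and in particular of the number and relative measure of the faces of $\elem$; the complete argument is carried out in \cite[Lemma~4.4]{Canetal14}.
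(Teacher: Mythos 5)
The paper itself offers no proof of this lemma: it is quoted directly from \cite[Lemma~4.4]{Canetal14}, so the only thing to compare against is that citation. Your two-case architecture --- a universal bound obtained through a sub-simplex $\elem_F^\flat$ having $F$ as a face, plus a complementary $p_j^{2d}$ bound available only for $p_j$-coverable elements, combined by taking the minimum --- is the correct structure, and your first case is complete: the affine trace inequality on an arbitrary (not necessarily shape-regular) $d$-simplex gives $\normL[v][2][F][2]\lesssim p_j^2\abs{F}\,\abs{\elem_F^\flat}^{-1}\normL[v][2][\elem_F^\flat][2]$, and enlarging the integration domain to $\elem$ and optimizing over $\elem_F^\flat\in\mcal[F][\elem][\flat]$ yields exactly the ``otherwise'' constant.

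The coverable branch, however, is misassembled in two places. First, you propose to apply ``the classical trace inequality on $K_i$'' to $F$; but $F$ is a face of $\elem$, not of the covering simplex $K_i$, and in general it lies in the interior of $K_i$, so the simplicial trace inequality is not applicable to it. The standard way to produce the $p_j^{2d}$ factor is instead through an $L^\infty$-type bound: $\normL[v][2][F][2]\le\abs{F}\,\|v\|_{L^\infty(\elem)}^2$ combined with the multidimensional Markov inverse estimate $\|v\|_{L^\infty(K_i)}^2\lesssim p_j^{2d}\abs{K_i}^{-1}\normL[v][2][K_i][2]$ on the shape-regular $K_i$; this, not an accumulation of layer estimates over $d$ directions, is the origin of $p_j^{2d}$. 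Second, and relatedly, the $1/p_j^2$ layer width in Definition~\ref{def:covering} is not ``converted into'' $p_j^{2d}$: its role is the opposite one of making the comparison $\normL[v][2][K_i][2]\lesssim\normL[v][2][\elem][2]$ hold with a $p_j$-\emph{independent} constant, because the $p_j^2$ growth of the one-dimensional Markov bound exactly cancels the $O(\mbox{diam}(K_i)/p_j^2)$ thickness of the uncovered layer, whose contribution can then be absorbed for $C_{as}$ sufficiently small. Together with $\abs{K_i}\gtrsim\abs{\elem}$ this gives the second entry of the minimum. Since you defer the full details to \cite[Lemma~4.4]{Canetal14}, the proposal works as a roadmap, but as literally written the coverable case would not go through.
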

\paul{Equipped with Lemma~\ref{lem:weak_inverse}, the interior penalty stabilization 
function $\sigma_j$, must be appropriately redefined; see \cite{Canetal14} 
for details.}
Finally, we observe that Assumption~\ref{ass:agglomeration}, 
together with \eqref{pjpj1}, implies that 
\begin{equation}
\begin{aligned}
\label{Cinvequiv}
\CINV{j}{p_j}{\elem_{j}^\pm}{F}\approx\CINV{j-1}{p_{j-1}}{\elem_{j-1}^\pm}{F}
&& \forall\, F\in\mcal[F][j]\cap\mcal[F][j-1],
&& j=2,\ldots,J.
\end{aligned}
\end{equation}
}
\section{Numerical results}
\label{sec:numerical}
In this section we present several numerical simulations to verify the theoretical estimates provided in 
Theorem~\ref{thm: 2lvl} and Theorem~\ref{thm:final} in the case of a two dimensional problem on the unit 
square $\Omega = (0,1)^2$. 
For the numerical tests, we consider the \added{two} sets of meshes shown in 
Figures~\ref{fig:grids} and~\ref{fig:tria_grids}.
 \added{The first set of initial grids are shown in Figure~\ref{fig:grids} (top line) and consist of 512 (Set 1), 1024 (Set 2), 2048 (Set 3) and 4096 (Set 4) polygonal elements. These meshes have been generated using the software package \texttt{PolyMesher} \cite{Talietal12}. We also consider an initial set of decompositions constisting of 582 (Set 1), 1086 (Set 2), 2198 (Set 3) and 4318 (Set 4) shape-regular triangles as \paul{depicted} in Figure~\ref{fig:tria_grids} (top line).}
Each initial grid is then subsequently coarsened in order to obtain a sequence of nested partitions by employing the software package 
MGridGen \cite{MouKar01,MouKar201}. \\
\begin{figure}[t!]
\centering
\includegraphics[width=1.0\textwidth]{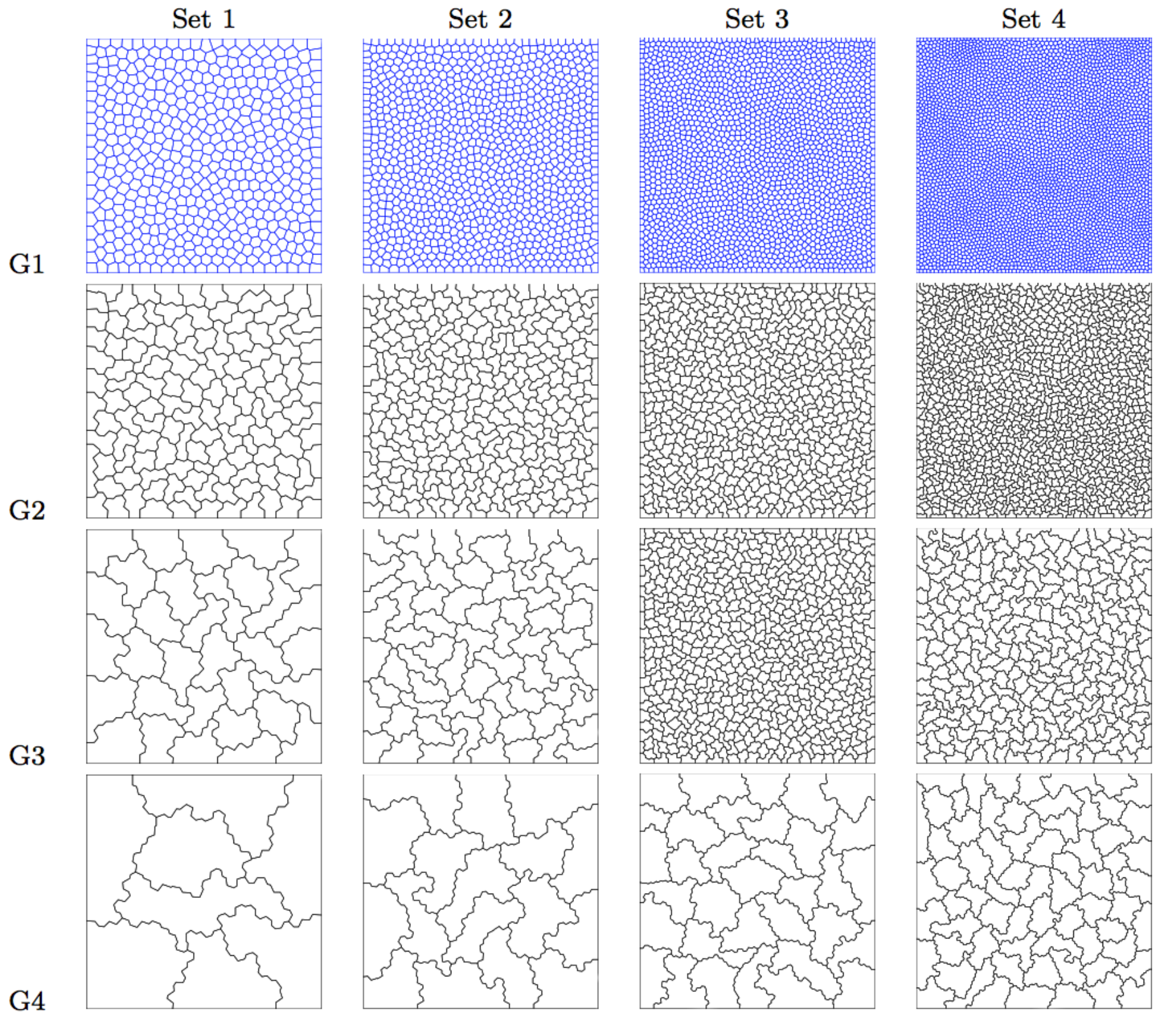} 
\caption{Sequences of agglomerated grids employed for numerical simulations. 
Top line: fine grids consisting of 512 (Set 1), 1024 (Set 2), 2048 (Set 3) and 4096 (Set 4) polygonal elements.}
\label{fig:grids}
\end{figure}

\begin{figure}[t!]
\centering
\includegraphics[width=1.0\textwidth]{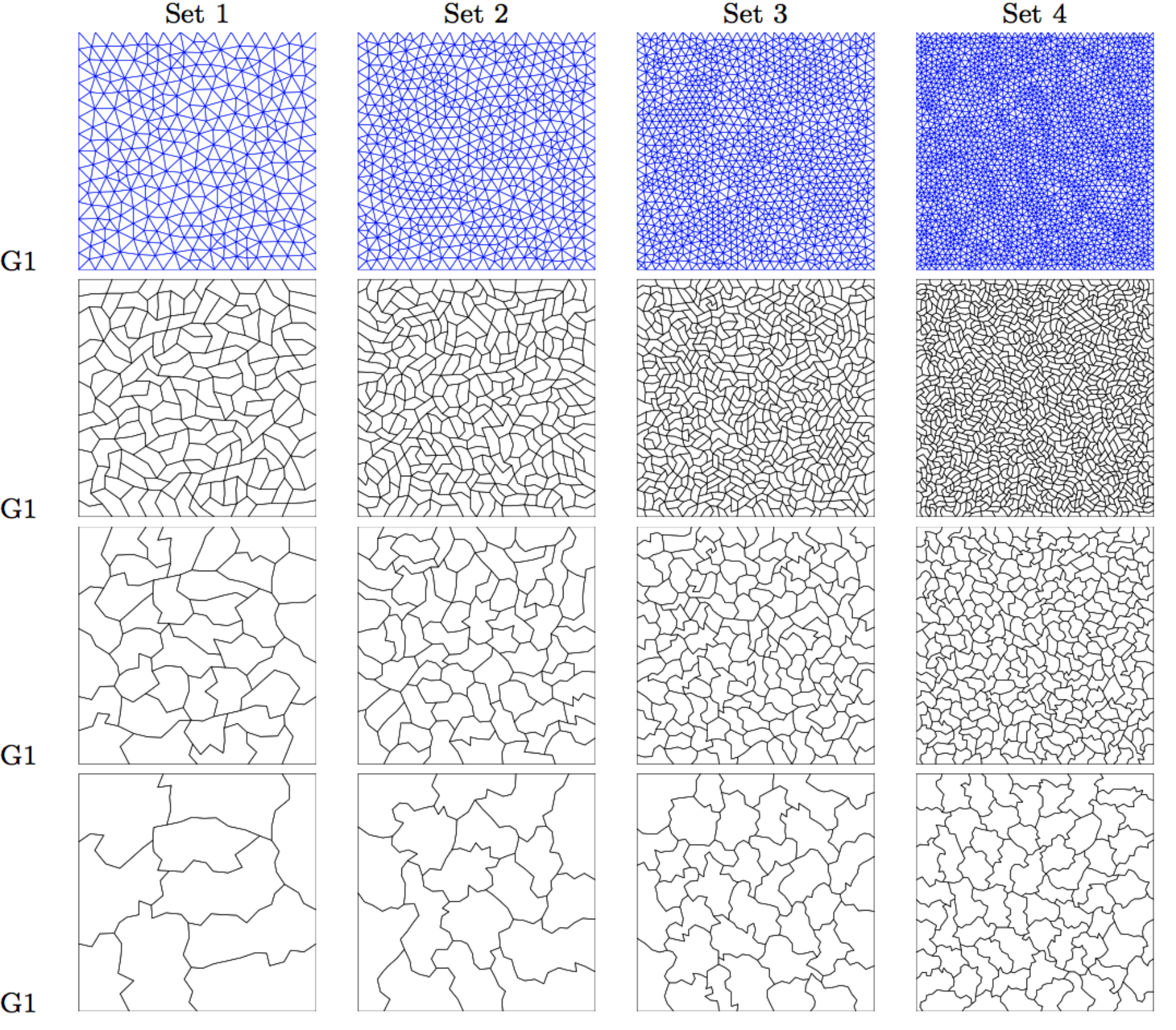} 
\caption{Sequences of agglomerated grids employed for numerical simulations. Top line: fine grids consisting of 582 (Set 1), 1086 (Set 2), 2198 (Set 3) and 4318 (Set 4) triangular elements.}
\label{fig:tria_grids}
\end{figure}


\added{Before testing the performance of the two-level and W-cycle multigrid solvers presented in Algorithm~\ref{alg:2lvl} and Algorithm~\ref{alg:multilevel}, respectively,  we first address the 
issue of the choice of the penalization coefficient $C_{\sigma}^j$ in \eqref{bilinearj}. 
According to Lemma~\ref{lem:contcoerc}, the bilinear form \Aa is coercive provided that $C_\sigma^j$ is chosen 
large enough. In Table~\ref{tab:Ccoerc}, we report the coercivity constant $\Ccoer$ of \eqref{coerc} for a fixed 
value of $C_\sigma^j \equiv C_\sigma = 10$ for $j=1,\ldots,4$.   We observe that the bilinear form is uniformly 
coercive for a constant value of the penalization coefficient, which is of the same magnitude as the one typically 
employed on standard shape-regular triangular meshes. As a consequence, in the following, we set $C_\sigma^j \equiv C_\sigma = 10$ for $j=1,\dots,4$.}
\begin{table}[t]
\centering
\footnotesize
\begin{tabular}{lcccc}
\hline
& Set 1 & Set 2 & Set 3 & Set 4\\
\hline
G1 & 0.7385 & 0.7375 & 0.7370 & 0.7364\\
G2 & 0.7624 & 0.7564 & 0.7559 & 0.7545\\
G3 & 0.7827 & 0.7818 & 0.7720 & 0.7611\\
G4 & 0.8153 & 0.8054 & 0.8001 & 0.7827\\
\hline
\end{tabular}
\caption{Value of the coercivity constant $\Ccoer$ for the sets of grids considered in Figure~\ref{fig:grids} with $C_{\sigma}^j = C_{\sigma} = 10$, $j = 1,\dots$,4. }
\label{tab:Ccoerc}
\end{table}

\added{We now consider the sequence of the grids shown in Figure~\ref{fig:grids},  Set 1}, and numerically evaluate the constant $\Ctwolvl\Sigma_J$, $J=2$, in 
Theorem~\ref{thm: 2lvl} and  the constant $\Chat\Sigma_3$ in Theorem~\ref{thm:final}, for the $h$-version 
of the two solvers, based on selecting $m_1=m_2=m=2p^2$, cf. Figure~\ref{fig:mp2}. Here, we observe that 
$\Ctwolvl\Sigma_2$ and $\Chat\Sigma_3$ are roughly (asymptotically) constant, as the polynomial
degree $p$ increases; thereby, this implies that $\Ctildenew{J}{J-1}$, $J=2,3$, respectively, is approximately $\mathcal{O}(1)$,
as $p$ increases. \textcolor{black}{Notice also that, in practice, the parameter $\mu=0$, even whenever a $p$--optimal interpolant cannot be explicetely constructed.}
\begin{figure}[t!]
\centering
\includegraphics[scale=1.5]{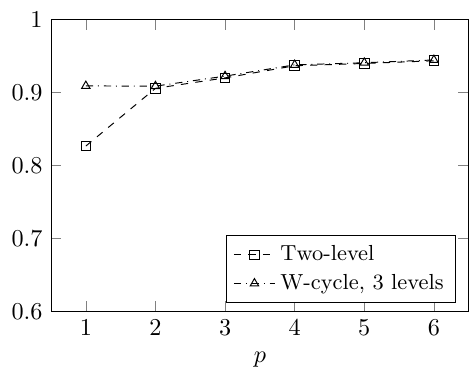}
\caption{Estimates of $\Ctwolvl\Sigma_J$ and $\Chat\Sigma_3$ in \protect\eqref{2lvl} 
and \protect\eqref{final}, respectively, as a function of $p$, and $m_1=m_2=m=2p^2$. \added{Sequence of agglomerated meshes shown in Figure~\ref{fig:grids}, Set 1.}}
\label{fig:mp2}
\end{figure}

Next, we investigate the performance of the two-level and W-cycle multigrid schemes in terms of the convergence factor
\begin{equation*}
\rho = \exp\left(\frac{1}{N}\ln \frac{\|\mathbf{r}_{N}\|_2}{\|\mathbf{r}_{0}\|_2}\right),
\end{equation*}
where $N$ denotes the number of iterations required to attain convergence up to a (relative) tolerance of $10^{-8}$ 
and $\mathbf{r}_{N}$ and $\mathbf{r}_{0}$ are the final and initial residual vectors, respectively. In Table~\ref{tab:hMGvsm}, 
we report the iteration counts and the convergence factor (in parenthesis), needed to attain convergence of the $h$-version 
of the two-level (TL) method and W-cycle multigrid scheme (with 3 and 4 levels), as a function of the number of elements 
(given by the choice of different grid sets), and the number of smoothing steps ($m_1=m_2=m$). Here, we have fixed the polynomial approximation order on each level $p_j \equiv p=1$. We first observe that, although the agglomerated grids, in general, do not 
necessarily strictly satisfy \paul{Assumption~\ref{hyp:shapereg}}, the number of iterations, for fixed $m$, does not 
significantly increase with the number of elements in the underlying mesh; moreover, for the W-cycle solver, the number of iterations remains bounded with the number of levels. As expected, the convergence is faster for larger values of $m$ and the solvers are convergent provided the number of smoothing steps is sufficiently large. For each grid, we have also reported the iteration counts $\NCG$ for the Conjugate Gradient (CG) method, which shows that the two proposed solvers outperform the CG scheme in terms of the number of iterations required to attain convergence, even when a small number of smoothing steps are employed.
\textcolor{black}{For the sake of comparison, we also report the iteration counts $\NPCG$ for the Preconditioned Conjugate Gradient (PCG) method, based on employing a simple block Jacobi preconditioner. The extension to polytopic grids of the domain decomposition preconditioning techniques, such as, for example, the ones proposed in \cite{AntHou,AntHouSme,AntSarVerZik_2016}, in the DG setting, or in \cite{Pav94,SchoMelPecZag_2008}, in the conforming setting, are currently under investigation and will be the subject of future research.} 
Table~\ref{tab:hMGvsm3} presents analogous results for the first three sets of meshes, in the case when $p=3$. Here, we observe that, as expected, the convergence factor increases, but the increase in $p$ does not require an increase in the minimal number of smoothing steps 
needed to ensure that the underlying multilevel solvers are convergent.
\begin{table}[t!]
\setlength\extrarowheight{1.5pt}
\centering
\footnotesize
\begin{tabular}{lccc||ccc}
\hline
 & \multicolumn{3}{c||}{Set 1} & \multicolumn{3}{c}{Set 2}\\ 
\hline
 & \multirow{2}{*}{TL}&\multicolumn{2}{c||}{W-cycle} & \multirow{2}{*}{TL}&\multicolumn{2}{c}{W-cycle}\\ 
 \hhline{~~--~--}
 & & 3 lvl & 4 lvl & &  3 lvl & 4 lvl\\
\hline
$m=3$ & 133 (0.87)& 160 (0.89)& 167 (0.90) & 121 (0.86)& 191 (0.91)& 188 (0.91)\\
$m=5$ & 95 (0.82)& 113 (0.85)& 113 (0.85) & 88 (0.81)& 121 (0.86)& 125 (0.86)\\
$m=8$ & 72 (0.77)& 82 (0.80)& 81 (0.80) & 67 (0.76)& 86 (0.81)& 88 (0.81)\\
$m=12$ & 57 (0.72)& 63 (0.74)& 62 (0.74) & 54 (0.71)& 65 (0.75)& 67 (0.76)\\
$m=16$ & 49 (0.68)& 52 (0.70)& 51 (0.69) & 46 (0.67)& 55 (0.71)& 56 (0.72)\\
$m=20$ & 44 (0.65)& 45 (0.66)& 44 (0.66) & 40 (0.63)& 48 (0.68)& 49 (0.68)\\
\hline
 & \multicolumn{3}{c||}{$\NCG = 445$, \textcolor{black}{$\NPCG = 326$}} 
 & \multicolumn{3}{c}{$\NCG = 633$, \textcolor{black}{$\NPCG = 480$}}\\  
\hline
 & \multicolumn{3}{c||}{Set 3} & \multicolumn{3}{c}{Set 4}\\ 
\hline
 & \multirow{2}{*}{TL}&\multicolumn{2}{c||}{W-cycle} & \multirow{2}{*}{TL}&\multicolumn{2}{c}{W-cycle}\\ 
 \hhline{~~--~--}
 & & 3 lvl & 4 lvl & &  3 lvl & 4 lvl\\
\hline
$m=3$ & 140 (0.88)& 188 (0.91)& 192 (0.91) & 162 (0.89)& 198 (0.91)& 198 (0.91)\\
$m=5$ & 99 (0.83)& 124 (0.86)& 128 (0.87) & 112 (0.85)& 131 (0.87)& 131 (0.87)\\
$m=8$ & 74 (0.78)& 89 (0.81)& 91 (0.82) & 83 (0.80)& 94 (0.82)& 94 (0.82)\\
$m=12$ & 58 (0.73)& 68 (0.76)& 69 (0.76) & 65 (0.75)& 73 (0.77)& 72 (0.77)\\
$m=16$ & 49 (0.68)& 56 (0.72)& 57 (0.72) & 55 (0.71)& 61 (0.74)& 61 (0.74)\\
$m=20$ & 43 (0.65)& 48 (0.68)& 49 (0.68) & 49 (0.68)& 53 (0.71)& 53 (0.70)\\
\hline
 & \multicolumn{3}{c||}{$\NCG = 946$, \textcolor{black}{$\NPCG = 678$}} 
 & \multicolumn{3}{c}{$\NCG = 1234$, \textcolor{black}{$\NPCG = 958$}}\\ 
\hline
\end{tabular}
\caption{Iteration counts and converge factor (in parenthesis) of the $h$-version of the two-level and W-cycle solvers and iteration counts of the \textcolor{black}{CG/PCG methods} as a function of $m$ ($C_\sigma^j \equiv C_\sigma=10$, $p=1$). \added{Sequences of agglomerated meshes shown in Figure~\ref{fig:grids}.}}
\label{tab:hMGvsm}
\end{table}
\begin{table}[t!]
\setlength\extrarowheight{1.5pt}
\centering
\footnotesize
\begin{tabular}{lccc||ccc||ccc}
\hline
 & \multicolumn{3}{c||}{Set 1} & \multicolumn{3}{c||}{Set 2} & \multicolumn{3}{c}{Set 3}\\ 
\hline
 & \multirow{2}{*}{TL}&\multicolumn{2}{c||}{W-cycle} & \multirow{2}{*}{TL}&\multicolumn{2}{c||}{W-cycle}& \multirow{2}{*}{TL}&\multicolumn{2}{c}{W-cycle} \\ 
 \hhline{~~--~--~--}
 & & 3 lvl & 4 lvl & &  3 lvl & 4 lvl & &  3 lvl & 4 lvl\\
\hline
$m=3$ & 1281 & 1334 & 1342 & 1168 & 1272 & 1362 & 1230 & 1379 & 1391\\
$m=5$ & 816 & 832 & 839 & 737 & 790 & 844 & 774 & 852 & 860\\
$m=8$ & 546 & 551 & 561 & 487 & 517 & 551 & 513 & 555 & 557\\
$m=12$ & 388 & 394 & 400 & 343 & 363 & 385 & 362 & 387 & 384\\
$m=16$ & 305 & 312 & 316 & 268 & 284 & 299 & 284 & 301 & 296\\
$m=20$ & 254 & 261 & 263 & 222 & 235 & 246 & 235 & 249 & 242\\
\hline
 & \multicolumn{3}{c||}{$\NCG = 1954$, \textcolor{black}{$\NPCG = 885$}} 
 & \multicolumn{3}{c||}{$\NCG = 2809$, \textcolor{black}{$\NPCG = 1264$}} 
 & \multicolumn{3}{c}{$\NCG = 4174$, \textcolor{black}{$\NPCG = 1708$}}\\ 
\hline
\end{tabular}
\caption{Iteration counts of the $h$-version of the two-level and W-cycle solvers \textcolor{black}{as a function of $m$  and the number of levels and iteration counts of the CG/PCG methods}  ($C_\sigma^j \equiv C_\sigma=10$, $p=3$). \added{Sequences of agglomerated meshes shown in Figure~\ref{fig:grids}.}}
\label{tab:hMGvsm3}
\end{table}

\added{Next, we consider the sets of nested grids obtained by agglomerating the shape-regular triangular meshes shown in Figure~\ref{fig:tria_grids}}, first row. The initial triangular decompositions constist of 528 (Set 1), 1086 (Set 2), 2198 (Set 3) and 4318 (Set 4) elements, cf. Figure~\ref{fig:tria_grids}, first row. In Table~\ref{tab:hMGvsm_tria}, we show the iteration counts needed to attain convergence with respect to a fixed tolerance of  $10^{-8}$ as a function of the set (\emph{i.e.}, the number of elements) and the number of smoothing steps of the $h$-version of the two-level and W-cycle multigrid solvers, with $p_j=p=1$. We recall that, as in the previous numerical test, we have considered $C_\sigma^j \equiv C_\sigma=10$, for each $j$. The results are similar to the case of initial polygonal meshes, with uniform convergence with respect to the granularity of the mesh and, in the case of the W-cycle solver, also with respect to the number of levels. We again attain improved performance, compared to the standard CG \textcolor{black}{and PCG methods}, in terms of the number of iterations required to attain convergence.
\begin{table}[t!]
\centering
\footnotesize
\setlength\extrarowheight{1.5pt}
\begin{tabular}{lccc||ccc}
\hline
 & \multicolumn{3}{c||}{Set 1} & \multicolumn{3}{c}{Set 2}\\ 
\hline
 & \multirow{2}{*}{TL}&\multicolumn{2}{c||}{W-cycle} & \multirow{2}{*}{TL}&\multicolumn{2}{c}{W-cycle}\\ 
 \hhline{~~--~--}
 & & 3 lvl & 4 lvl & &  3 lvl & 4 lvl\\
\hline
$m=4$ & 246 (0.90)& 258 (0.90)& 262 (0.90) & 282 (0.89)& 291 (0.90)& 292 (0.90)\\
$m=6$ & 177 (0.87)& 185 (0.87)& 188 (0.87) & 199 (0.86)& 205 (0.87)& 204 (0.87)\\
$m=10$ & 120 (0.81)& 125 (0.82)& 127 (0.82) & 133 (0.81)& 136 (0.82)& 136 (0.82)\\
$m=14$ & 94 (0.77)& 98 (0.78)& 99 (0.78) & 104 (0.77)& 106 (0.78)& 106 (0.78)\\
$m=18$ & 79 (0.74)& 82 (0.74)& 83 (0.74) & 87 (0.74)& 89 (0.75)& 89 (0.75)\\
\hline
 & \multicolumn{3}{c||}{$\NCG = 551$, \textcolor{black}{$\NPCG = 369$}} 
 & \multicolumn{3}{c}{$\NCG = 771$, \textcolor{black}{$\NPCG = 504$}}\\
 \hline
 & \multicolumn{3}{c||}{Set 3} & \multicolumn{3}{c}{Set 4}\\ 
\hline
 & \multirow{2}{*}{TL}&\multicolumn{2}{c||}{W-cycle} & \multirow{2}{*}{TL}&\multicolumn{2}{c}{W-cycle}\\ 
 \hhline{~~--~--}
 & & 3 lvl & 4 lvl & &  3 lvl & 4 lvl\\
\hline
$m=4$ & 328 (0.90)& 333 (0.91)& 329 (0.90) & 421 (0.91)& 425 (0.91)& 422 (0.91)\\
$m=6$ & 231 (0.87)& 234 (0.88)& 232 (0.87) & 292 (0.88)& 293 (0.89)& 292 (0.89)\\
$m=10$ & 153 (0.82)& 154 (0.83)& 153 (0.82) & 190 (0.83)& 191 (0.84)& 189 (0.84)\\
$m=14$ & 118 (0.78)& 119 (0.79)& 118 (0.78) & 145 (0.79)& 148 (0.80)& 146 (0.80)\\
$m=18$ & 98 (0.75)& 99 (0.75)& 98 (0.75) & 120 (0.76)& 123 (0.77)& 122 (0.77)\\
\hline
 & \multicolumn{3}{c||}{$\NCG = 1145$, \textcolor{black}{$\NPCG = 718$}} 
 & \multicolumn{3}{c}{$\NCG = 1630$, \textcolor{black}{$\NPCG = 974$}}\\ 
\hline
\end{tabular}
\caption{Iteration counts and converge factor (in parenthesis) of the $h$-version of the two-level and W-cycle solvers and iteration counts of the CG \textcolor{black}{and PCG methods} as a function of $m$ ($C_\sigma^j \equiv C_\sigma=10$, $p=1$). \added{Sequences of agglomerated meshes shown in Figure~\ref{fig:tria_grids}.}}
\label{tab:hMGvsm_tria}
\end{table}

\added{Finally, we present a more exhaustive investigation of the effect of increasing $p$ while keeping fixed the number of smoothing steps. For this set of experiments, we consider a fine grid of 1024 elements and the corresponding agglomerated meshes (Set 2 in Figure~\ref{fig:grids}). In Table~\ref{tab:hMGvsp} we report the iteration counts of the $h$-version of the two-level and W-cycle solvers as a function of $p$, employing $m=5$ pre- and post- smoothing steps. We observe that, as expected, even though 
both multilevel solvers converge for a fixed value of $m$, the number of iterations required to
reduce the relative residual below the given tolerance grows with increasing $p$. 
However, the two-level and W-cycle multigrid solvers still employ
less iterations, than the number required by both the CG and PCG methods, cf. the last two columns of Table~\ref{tab:hMGvsp}.}
\begin{table}[t!]
\setlength\extrarowheight{1.5pt}
\centering
\footnotesize
\begin{tabular}{lccc|cc}
\hline
& \multirow{2}{*}{TL}
&\multicolumn{2}{c|}{W-cycle}
& \multirow{2}{*}{$\NCG$}
& \multirow{2}{*}{\textcolor{black}{$\NPCG$}}\\
\hhline{~~--~~}
 &  &  3 lvl & 4 lvl  &  &  \\
\hline
$p=1$ & 88 & 121 &125 & 633 & \textcolor{black}{480} \\
$p=2$ & 357 & 434 & 443 &1701   &\textcolor{black}{953} \\
$p=3$ & 737 & 790 & 844 &2809 & \textcolor{black}{1264} \\
$p=4$ & 958 & 1093 & 1184 & 4574 & \textcolor{black}{1821} \\
$p=5$ & 876 & 1096 & 1201 & 6796 &\textcolor{black}{2213} \\
\hline
\end{tabular}
\caption{Iteration counts of the $h$-version of the two-level and W-cycle solvers \textcolor{black}{as a function of $p$ and the number of levels and corresponding CG/PCG iteration counts} ($C_\sigma^j \equiv C_\sigma=10$, $m=5$). \added{Sequence of agglomerated meshes shown in Figure~\ref{fig:grids}, Set 2.}}
\label{tab:hMGvsp}
\end{table}
\added{As a numerical comparison, we have solved the correspoding linear systems of equations employing an unsmoothed aggregation Algebraic Multigrid (AMG) algorithm based on three popular algebraic agglomeration strategies.
More precisely, in the considered AMG methods the agglomerates are formed, at a purely algebraic level, by using either
the maximal independent set, the (approximate) maximum weighted matching or the greedy aggregation algorithms, and the resulting coarse levels are then employed as before 
within a $W$-cycle iteration with a Richardon smoother with $m=5$ pre- and post-smoothing steps. 
In Table~\ref{tab:AMG} we report, for each of the considered agglomeration strategies, 
the number of agglomeration levels (N. levels), as well as the computed convergence 
factors $\rho$. 
As it is clear from the results reported in Table~\ref{tab:AMG}, classical algebraic 
agglomeration procedures \paul{are not efficient when applied to} 
the matrices arising from high-order DG approximations; indeed in all the cases 
the resulting algorithm is not able to reduce the (relative) residual below the 
given tolerance 
within 5000 iterations, cf. last column of Table~\ref{tab:AMG} where the 
iteration counts ($\NAMG$) are shown. 
Such behaviour strongly suggests that the geometric information \paul{needs to}
 be taken into account 
in the construction of the solver and/or more sophisticated (aggressive) 
aggregation-based algebraic 
algorithms, as well as Schwarz-type smoothers, such as the ones proposed, for example, in
\cite{OlsonSchroder_2011,BastianBlattScheichl_2012}, should be considered. 
Such developments are currently under investigation and will be the subject of future research.}
\begin{table}[t!]
\setlength\extrarowheight{1.5pt}
\centering
\footnotesize
\begin{tabular}{c|cc|cc|cc|c}
& \multicolumn{2}{|c|}{Max. independent set}
& \multicolumn{2}{c|}{Max. weighted matching}
& \multicolumn{2}{c|}{Greedy aggregation}
&\\
&$\textcolor{black}{\rho}$  & \textcolor{black}{N. levels}
&  $\textcolor{black}{\rho}$  & \textcolor{black}{N. levels}
&  $\textcolor{black}{\rho}$  & \textcolor{black}{N. levels}
& $\NAMG$\\
\hline  

$p=1$&\textcolor{black}{0.9990} & \textcolor{black}{3} 
&\textcolor{black}{0.9987} & \textcolor{black}{7} 
&\textcolor{black}{0.9992} & \textcolor{black}{5} 
& \textcolor{black}{$>$ 5000}   \\
$p=2$&\textcolor{black}{0.9989} & \textcolor{black}{3} 
&\textcolor{black}{ 0.9986} & \textcolor{black}{8} 
&\textcolor{black}{0.9988} & \textcolor{black}{5} 
& \textcolor{black}{$>$ 5000}   \\
$p=3$&\textcolor{black}{0.9989} & \textcolor{black}{3} 
&\textcolor{black}{0.9986} & \textcolor{black}{8} 
&\textcolor{black}{0.9990} & \textcolor{black}{6} 
& \textcolor{black}{$>$ 5000}   \\
$p=4$&\textcolor{black}{0.9989} & \textcolor{black}{3} 
&\textcolor{black}{0.9986} & \textcolor{black}{9} 
&\textcolor{black}{0.9989} & \textcolor{black}{6} 
& \textcolor{black}{$>$ 5000}   \\
$p=5$&\textcolor{black}{0.9989} & \textcolor{black}{3} 
&\textcolor{black}{0.9986} & \textcolor{black}{9} 
&\textcolor{black}{0.9988} & \textcolor{black}{6} 
& \textcolor{black}{$>$ 5000}   \\
\hline
\end{tabular}
\caption{\added{Number of agglomeration levels (N. levels) and computed convergce factors ($\rho$) of Algebraic $W$-cycle multigrid method (Richardon smoother, $m=5$ pre- and post-smoothing steps) as a function of $p$. The agglomerates are formed algebraically by using either the maximal independent set, the (approximate) maximum weighted matching or the greedy aggregation algorithms. The initial fine grid is shown in Figure~\ref{fig:grids}, Set 2 (first row).}}
\label{tab:AMG}
\end{table}
\FloatBarrier

\section{Conclusions}
\label{sec:conclusions}
\added{
We have presented and analyzed two-level and multigrid schemes for the efficient solution of the linear system of equations arising from the $hp$-version of the interior penalty DG scheme on polygonal/polyhedral meshes. 
The attractive feature of the proposed algorithms is that the auxiliary sequence of meshes needed 
by the multilevel solver can be generated by a (successive) general geometric agglomeration procedure 
starting from an initial grid made of (possibly arbitrarily-shaped) elements. 
Such an approach fully exploits the flexibilty of DG methods \paul{in terms of their ability 
to} handle arbitrarily-shaped elements, including polytopic elements, see \cite{AntBreMar09,dg_cfes_2012,Canetal14,BasBotColSub,CangianiDongGeorgoulisHouston_2016,AntFacRusVer2016,CangianiDongGeorgoulis_2016}, and the recent review paper \cite{Anetal16_review}.
Extending the theoretical results recently presented in \cite{AntSarVer15} on quasi-uniform meshes, we have proved that, under mild geometric assumptions on the quality of the agglomerates, both the two-level and W-cycle multigrid schemes converge uniformly with respect to the discretization parameters (namely, the granularity of the underlying partition and the polynomial approximation degree $p$) and, for the multigrid scheme, the number of levels, provided that the number of smoothing steps is chosen \textcolor{black}{sufficiently large}.  We have also demonstrated through numerical experiments that the theoretical assumption \paul{concerning the need to employ a} sufficently large number of smoothing steps is not needed in practice, i.e., our algorithms converge even if the number of smoothing steps is kept fixed independently of the polynomial approximation degree $p$. \paul{However, in this latter} case, the performance of the iterative solvers deteriorates, as expected, when increasing $p$.
}

\appendix

\end{document}